\numberwithin{equation}{section}
\newtheorem{theorem}{Theorem}[section]
\newtheorem{lemma}[theorem]{Lemma}
\newtheorem{proposition}[theorem]{Proposition}
\theoremstyle{definition}
\theoremstyle{remark}
\newtheorem{remark}[theorem]{Remark}
\newcommand{\N}{\mathbb{N}}
\newcommand{\R}{\mathbb{R}}
\newcommand{\X}{{\R^d}}
\newcommand{\B}{\mathcal{B}}
\newcommand{\K}{\mathcal{K}}
\newcommand{\ga}{\gamma}
\newcommand{\Ga}{\Gamma}
\newcommand{\la}{\lambda}
\newcommand{\La}{\Lambda}
\newcommand{\eps}{\varepsilon}
\newcommand{\est}{\mathrm{est}}
\newcommand{\fec}{\mathrm{fec}}
\newcommand{\Le}{L_\est}
\newcommand{\Lf}{L_\fec}
\renewcommand{\L}{\mathcal{L}}
\newcommand{\LC}{\mathcal{L}_C}
\newcommand{\hLe}{\hat{L}_\est}
\newcommand{\hLf}{\hat{L}_\fec}
\newcommand{\D}{\mathcal{D}}
\newcommand{\Dom}{\mathrm{Dom}}
\newcommand{\e}{{(\eps)}}
\begin{document}

\title{Establishment and Fecundity in~Spatial~Ecological~Models: Statistical~Approach and~Kinetic Equations}

\author{Dmitri Finkelshtein\thanks{Institute of Mathematics,
         National Academy of Sciences of Ukraine,
         01601 Kiev-4, Ukraine, e-mail:fdl@imath.kiev.ua} \and 
         Yuri Kondratiev\thanks{Fakult\"at f\"ur Mathematik,
         Universit\"at Bielefeld,
         Postfach 110 131, 33501 Bielefeld, Germany,
         e-mail: kondrat@math.uni-bielefeld.de} \and
         Oleksandr Kutoviy\thanks{Fakult\"at f\"ur Mathematik,
         Universit\"at Bielefeld,
         Postfach 110 131, 33501 Bielefeld, Germany,
         e-mail: kutoviy@math.uni-bielefeld.de}}

\maketitle

\begin{abstract}
We consider spatial population dynamics given by Markov birth-and-death process with constant mortality and birth influenced by establishment or fecundity mechanisms. The~independent and density dependent dispersion of spreading are studied. On the base of general methods of \cite{FKK2011a}, we construct the state evolution of considered microscopic ecological systems. We analyze mesoscopic limit for stochastic dynamics under consideration. The corresponding Vlasov-type non-linear kinetic equations are derived and studied.
\end{abstract}

{\bf Keywords}: Spatial birth-and-death processes, individual based models, establishment, fecundity, Vlasov-type equation

{\bf MSC (2010)}: 46E30; 47D06; 82C21; 35Q83

\section{Introduction}

Complex systems theory is a quickly growing interdisciplinary area with a very broad
spectrum of motivations and applications. One may characterize complex systems by such
properties as diversity and individuality of components, localized interactions among
components, and the outcomes of interactions used for replication or enhancement of
components. In the study of these systems, proper language and techniques are delivered
by
the interacting particle models which form a rich and powerful direction in modern
stochastic and infinite dimensional analysis. Interacting particle systems are widely
used
as models in condensed matter physics, chemical kinetics, population biology, ecology,
sociology, and economics.

Mathematical realizations of such models may be considered as a dynamics of points in
proper state spaces. In some applications the possible locations for the points of
system
are structured, e.g., if we consider dynamics on graphs, or, in particular, on lattices.
Another class of models can be characterized by the free positions of points in
continuum,
e.g., in Euclidean space $\X$. As it was shown originally in statistical physics, many
empirical effects, such as phase transition, are impossible in systems with finite
number
of points. Therefore, systems with infinite points can be considered as mathematical
approximation for realistic systems with huge \emph{but finite} number of elements.
Among
all infinite systems we will study locally finite ones. Namely, the configuration space
over space $\X$ consists of all locally finite subsets (configurations) of $\X$
\begin{equation}  \label{confspace}
\Ga =\Ga\bigl(\X\bigr) :=\Bigl\{ \ga \subset \X \Bigm| |\ga
_\La
|<\infty, \ \mathrm{for \ all } \ \La \in {\B}_{\mathrm{b}}
(\X)\Bigr\}.
\end{equation}
Here $\ga_\La:=\ga\cap\La$, the symbol $|\cdot|$ stands for the cardinality of a set,
and
${\B}_{\mathrm{b}} ({\X})$ denotes the class of all bounded Borel sets in $\X$. Each
configuration may be identified with a Radon measure on $\X$ by the relation
$\ga(\La)=|\ga_\La|$. As a result, $\Ga$ can be equipped with the vague topology and the
corresponding Borel $\sigma$-algebra.

Depending on application the points of system may be interpreted as molecules in
physics,
plants in ecology, animals in biology, infected people in medicine, companies in
economics, market agents in finance, and so on. It is supposed that points of a system
evolve in time interacting with each other. In the present paper we focus our attention
to
the dynamics with birth and death mechanisms.

The spatial birth-and-death dynamics describe an evolution of configurations in $\R^d$,
in
which points of configurations (particles, individuals, elements) randomly appear (born)
and disappear (die) in the space.
Heuristically, the
corresponding Markov generator has the following form:
\begin{equation} \label{BP-gen}
(LF)(\ga)= \sum_{x\in\ga}d(x,\ga\setminus
x)D_{x}^{-}F(\ga)+\int_{\R^{d}}b(x,\ga)D_{x}^{+}F(\ga)dx,
\end{equation}
where for $F:\Ga\to\R$, $x\notin\ga$
\begin{equation}
D_{x}^{-}F(\ga)=F(\ga\setminus x)-F(\ga), \qquad
D_{x}^{+}F(\ga)=F(\ga\cup x)-F(\ga).\label{elementarydif}
\end{equation}
Here functions $d$ and $b$ describe rates of death and birth correspondingly (for details
see, e.g., \cite{FKK2011a}).

In the present paper we apply the results of \cite{FKK2011a} to study the question about
the existence of the evolution corresponding to \eqref{BP-gen} for a particular choice of
the functions $d$ and $b$. This question can be answered once we will be able to
construct a semigroup associated with $L$ in a proper functional space. This semigroup
determines the solution
to the Kolmogorov equation, which formally (only in the sense of action of operator) has
the following form:
\[
\frac{dF_t}{dt}=LF_t,\qquad
F_t\bigm|_{t=0}=F_0.
\]
To show directly that $L$ is a~generator of a~semigroup in some reasonable functional
spaces on $\Ga$ seems to be difficult problem. This difficulty is hidden in the complex
structure of non-linear infinite dimensional space $\Ga$.
However, in various applications the corresponding evolution of states (measures on
configuration space) helps already to understand the behavior of the process and makes
possible to predict the equilibrium states of our system. In fact, properties of such an
evolution itself are very important for application. The evolution of states is
heuristically given as a~solution to the dual Kolmogorov equation (Fokker--Planck
equation):
\begin{equation}
\frac{d\mu_t}{dt}=L^*\mu_t, \qquad
\mu_t\bigm|_{t=0}=\mu_0\label{FokkerPlanck},
\end{equation}
where $L^*$ is an adjoint operator to $L$ defined on some space of measures on $\Ga$,
provided, of course, that it exists.

Technically, we will study solutions of \eqref{FokkerPlanck} in terms of correlations
functions, $k_t^{(n)},\,n\geq 0$ which are symmetric functions on $(\X)^n$ and related
to
a density of distribution for each $n$ points of our system (rigorous definition will be
given in  Section 2).

Among all birth-and-death processes we will consider
only those in which new particles appear from existing ones. These processes correspond
to the models of the spatial ecology. In the recent paper \cite{FKK2009}, we studied
Bolker--Dieckmann--Law--Pacala ecological
model, which corresponds to the following mechanism of evolution. Each existing
individual
can give birth to the new one independently of all other individuals of the system. It
may
also die influenced by the global regulation (mortality) again independently of all
other
members of the population or it dies because of the interaction with the rest of the
population (local regulation). The latter mechanism may be described as a~competition
(e.g., for resources) between individuals in the population. Heuristically, the
corresponding Markov generator has the form \eqref{BP-gen} with
\begin{eqnarray}
d(x,\ga)&=&m+\varkappa^{-}\sum_{y\in\ga}a^{-}(x-y), \label{BP-deathrate}\\
b(x,\ga)&=&\varkappa^{+}\sum_{y\in\ga}a^{+}(x-y),\label{BP-birthrate}
\end{eqnarray}
Here $a^{+}, a^{-}$ are probability densities, and constants $m, \varkappa^{+},
\varkappa^{-}\geq 0$. In population ecology, the constant $m$ is called mortality and
the
functions $a^+, a^-$ are known as dispersion and competition kernel, respectively.

By \cite{FKK2009}, if $m=\varkappa^-=0$ (free growth model) then the first correlation
function (density of the system) grows exponentially in time. To suppress this growth we
may consider the case $m>\varkappa^-=0$ (contact model, see also \cite{KS2006,KKP2008}). Then for $m\geq \varkappa^+$ we obtain globally bounded density (even
decaying in time for $m>\varkappa^+$). Nevertheless, locally the system will show
clustering. Namely, $k_t^{(n)}\sim n!$ on a~small regions for $t\geq0$ (see
\cite{FKK2009}
for details). The main result of \cite{FKK2009} may be informally stated in the following
way: if the
mortality $m$ and the competition kernel $\varkappa^-a^-$ are large enough, then the
dynamics of correlation functions associated with the pre-generator \eqref{BP-gen}
preserves \mbox{(sub-)Poissonian} bound for correlation functions for all times, i.e.,
$k_t^{(n)}\leq C^n$, $C>0$, $n\geq1$.

In the present article we introduce new mechanisms of local
regulation in the corresponding system, alternatively to \eqref{BP-deathrate}. Namely,
we set $\varkappa^-=0$ in \eqref{BP-deathrate} and consider two different modifications
of \eqref{BP-birthrate}. The first one
includes the influence of the whole system on the reproduction (fertility, fecundity) of
each single individual. The second modification of \eqref{BP-birthrate} contains
a~mechanism which shows establishment of each individual in the system. The precise
descriptions are given in the next section. Such models have been actively studied in modern ecological literature, see e.g. \cite{BP1999} and references therein. Here, for the first time, we present a rigorous mathematical description for these evolutions.

This article is organized in the following way. In Section 2, we describe the model
rigorously providing the proper spaces for the corresponding functional evolutions. In
Section~3 we apply general results about birth-and-death dynamics on configuration
spaces
obtained in \cite{FKK2011a}. Informally, the main results state that if mortality $m$ is
big enough and negative influence of establishment or fecundity
is dominated by dispersion then the corresponding evolution exist. In Section~4, we
study the mesoscopic description of our model in terms of Vlasov scaling.

It should be noted also, that the Vlasov-type scalings for some
Markov processes on finite configuration spaces were considered in
\cite{Bel1988,Bel1989,BK2003,BM1977,BMT1981}. Note that the
corresponding limiting hierarchy was obtained at the heuristic
level. In the present paper, we prove a weak convergence to the
limiting hierarchy in the case of infinite continuous systems for
bounded but non-integrable densities.

It is worth pointing out that the necessity of a big mortality is a result of
perturbation
theory for linear operators which gives the existence of the corresponding dynamics for
the infinite time interval. However, with the help of another technique considered in
\cite{BKKK2011}, \cite{FKKoz2011}, we are able to show the existence of the dynamics
with
any mortality but only on finite interval of time. This result will be presented in the
forthcoming paper.

\section{Description of model}
We recall that the configuration space $\Ga$ is given by \eqref{confspace}. It is
equipped
with the vague topology, i.e., the weakest topology for which all mappings
$\Ga\ni\ga\mapsto \sum_{x\in\ga} f(x)\in{\R}$ are continuous for any continuous function
$f$ on $\X$ with compact support. The space $\Ga$ with the vague topology is a Polish
space (see, e.g., \cite{KK2006} and references therein). The corresponding Borel $\sigma
$-algebra $\B(\Ga )$ will be the smallest $\sigma$-algebra for which all mappings $\Ga
\ni
\ga \mapsto |\ga_ \La |\in{ \N}_0:={\N}\cup\{0\}$ are measurable for any $\La\in{
\B}_{\mathrm{b}}(\X)$, see, e.g., \cite{AKR1998a}. We set ${{\mathcal{
F}}_{\mathrm{cyl}}}(\Ga )$ for the class of all \textit{cylinder functions} on $\Ga$.
Each
$F\in {{\mathcal{F}}_{\mathrm{cyl}}}(\Ga )$ is characterized by the following relation:
$F(\ga )=F(\ga_\La )$ for some $\La\in \B_{\mathrm{b}}(\X)$.

Let $0\leq\phi\in L^{1}(\R^{d})$ be given even function such
that
\begin{equation}\label{int_cond}
c_{\phi }:=\int_\X \Bigl( 1-e^{-\phi  ( x ) }\Bigr)
dx\in(0;\,+\infty).
\end{equation}
For any even $0\leq f\in L^1(\X)$ we denote
\begin{eqnarray*}
E^{f}(\eta)&:=&\sum_{x\in\eta}\sum_{y\in\eta\setminus x}f(x-y),
\qquad\eta\in\Ga_{0}\\
E^{f}(x,\ga)&:=&\sum_{y\in\ga\setminus x}f(x-y),
\qquad\ga\in\Ga, \,
x\in\X,\\
\langle f \rangle&:=&\int_\X f(x)dx.
\end{eqnarray*}

As it was already mentioned in the Introduction we would like to study two classes of
the
interacting particle systems (IPS), whose mechanisms of evolution are described by the
corresponding heuristically given Markov generators:
\begin{eqnarray}
(\Le F)(\ga)&:=&
m\sum_{x\in\ga}\bigl[F(\ga\setminus x)-F(\ga)\bigr]\nonumber\\
&&+\sum_{y\in\ga}\int_{\R^{d}}b_{0}( x,y,\gamma \setminus
y)
e^{-E^{\phi }( x,\gamma ) }\bigl[F(\ga\cup
x)-F(\ga)\bigr]dx\label{est-gen}
\end{eqnarray}
and
\begin{eqnarray}
(\Lf
F)(\ga)&:=&m\sum_{x\in\ga}\bigl[F(\ga\setminus
x)-F(\ga)\bigr]\nonumber\\
&&+\sum_{y\in\ga}e^{-E^{\phi } ( y,\gamma\setminus y )
}\int_{\R^{d}}b_0( x,y,\gamma \setminus y) \bigl[F(\ga\cup
x)-F(\ga)\bigr]dx.\label{fec-gen}
\end{eqnarray}
The first model shows the influence of establishment in the system and the second one
presents fecundity. Here and in the sequel the mortality $m$ is always supposed to be
strictly positive. One can see that the establishment rate $e^{-E^{\phi }( x,\gamma ) }$
will be smaller if $x$ will be inside or close to the dense region of the configuration
$\ga$. In its turn the fecundity rate $e^{-E^{\phi }( y,\gamma\setminus y ) }$ would be
also smaller if $y$ is situated in the dense area of $\ga$. The non-negative measurable
rate $b_0$ represents the dispersion of the model. Let $0\leq a^+, b^+\in L^1(\X)$ be
given even functions, and $\langle a^+\rangle=1$. We consider two types of the
dispersion:
\begin{itemize}
\item \emph{density independent dispersion}
\[
b_0(x,y,\gamma \setminus y)=\varkappa^+a^+(x-y),
\]
\item \emph{density dependent dispersion}
\[
b_0(x,y,\gamma \setminus y)=a^+(x-y)\biggl(
\varkappa^++\sum_{y'\in\ga\setminus y} b^+(y-y')\biggr).
\]
\end{itemize}

As it was mentioned above, we will study evolution of our model in terms of its
correlation functions. Below we introduce some basic notions needed to describe the
corresponding evolution.

The space of $n$-point configurations in an arbitrary $Y\in\B(\X)$ is defined by
\[
\Ga^{(n)}(Y):=\Bigl\{  \eta \subset Y \Bigm| |\eta |=n\Bigr\}
,\quad
n\in { \N}.
\]
By definition we take $\Ga^{(0)}(Y):=\{\emptyset\}$. As a set, $\Ga^{(n)}(Y)$ may be
identified with the symmetrization of $\widetilde{Y^n} = \bigl\{ (x_1,\ldots ,x_n)\in
Y^n
\bigm| x_k\neq x_l \ \mathrm{if} \ k\neq l\bigr\}$. Hence one can introduce the
corresponding Borel $\sigma $-algebra, which we denote by $\B\bigl(\Ga^{(n)}(Y)\bigr)$.
The space of finite configurations in an arbitrary $Y\in\B(\X)$ is defined by
\[
\Ga_0(Y):=\bigsqcup_{n\in {\N}_0}\Ga^{(n)}(Y).
\]
This space is equipped with the topology of the disjoint union. On $\Ga_0(Y)$ we
consider
the corresponding Borel $\sigma $-algebra denoted by $\B \bigl(\Ga_0(Y)\bigr)$. In the
case of $Y=\X$ we will omit $Y$ in the notation. Namely, $\Ga_0:=\Ga_{0}(\X)$,
$\Ga^{(n)}:=\Ga^{(n)}(\X)$.

The restriction of the Lebesgue product measure $(dx)^n$ to $\bigl(\Ga^{(n)},
\B(\Ga^{(n)})\bigr)$ we denote by $m^{(n)}$. We set $m^{(0)}:=\delta_{\{\emptyset\}}$.
The
Lebesgue--Poisson measure $\la $ on $\Ga_0$ is defined by
\[
\la :=\sum_{n=0}^\infty \frac {1}{n!}m^{(n)}.
\]
For any $\La\in\B_{\mathrm{b}}(\X)$ the restriction of $\la$ to $\Ga
(\La):=\Ga_{0}(\La)$
will be also denoted by $\la $. The space $\bigl( \Ga, \B(\Ga)\bigr)$ can be obtained as
the projective limit of the family of spaces $\bigl\{\bigl( \Ga(\La),
\B(\Ga(\La))\bigr)\bigr\}_{\La \in \B_{\mathrm{b}} (\X)}$, see, e.g., \cite{AKR1998a}.
The
Poisson measure $\pi$ on $\bigl(\Ga ,\B(\Ga )\bigr)$ is given as the projective limit of
the family of measures $\{\pi^\La \}_{\La \in \B_{\mathrm{b}} (\X)}$, where $
\pi^\La:=e^{-m(\La)}\la $ is the probability measure on $\bigl( \Ga(\La),
\B(\Ga(\La))\bigr)$ and $m(\La)$ is the Lebesgue measure of $\La\in \B_{\mathrm{b}}
(\X)$;
see, e.g., \cite{AKR1998a}.

A set $M\in \B (\Ga_0)$ is called bounded if there exists $ \La \in \B_{\mathrm{b}}
(\X)$
and $N\in { \N}$ such that $M\subset \bigsqcup_{n=0}^N\Ga^{(n)}(\La)$. The set of
bounded
measurable functions with bounded support we denote by $ B_{\mathrm{bs}}(\Ga_0)$, i.e.,
$G\in B_{\mathrm{bs}}(\Ga_0)$ if $ G\upharpoonright_{\Ga_0\setminus M}=0$ for some
bounded
$M\in {\B }(\Ga_0)$. Any $\B(\Ga_0)$-measurable function $G$ on $ \Ga_0$, in fact, is
defined by a sequence of functions $\bigl\{G^{(n)}\bigr\}_{n\in{ \N}_0}$ where $G^{(n)}$
is a $\B(\Ga^{(n)})$-measurable function on $\Ga^{(n)}$. As usual, functions on $\Ga$
are
called {\em observables} and functions on $\Ga_0$ are called {\em quasi-observables}.

There exists a mapping from $B_{\mathrm{bs}} (\Ga_0)$ into ${{
\mathcal{F}}_{\mathrm{cyl}}}(\Ga )$, which plays the key role in our further
considerations. It has the following form
\begin{equation}
KG(\ga ):=\sum_{\eta \Subset \ga }G(\eta ), \quad \ga \in
\Ga,
\label{KT3.15}
\end{equation}
where $G\in B_{\mathrm{bs}}(\Ga_0)$, see, e.g.,
\cite{KK2002,Len1975,Len1975a}. The summation in
\eqref{KT3.15} is
taken over all finite subconfigurations $\eta\in\Ga_0$ of the
(infinite) configuration $\ga\in\Ga$; we denote this by the
symbol,
$\eta\Subset\ga $. The mapping $K$ is linear, positivity
preserving,
and invertible, with
\begin{equation}
K^{-1}F(\eta ):=\sum_{\xi \subset \eta }(-1)^{|\eta \setminus
\xi
|}F(\xi ),\quad \eta \in \Ga_0.  \label{k-1trans}
\end{equation}
Note that if function $F$ has special form
\[
F(\ga)=\sum_{x\in\ga}H(x,\ga\setminus x),
\]
where $H(x,\cdot)$ is defined point-wisely
at least on $\Ga_0$,
then, by direct computation,
\begin{equation}
\bigl(K^{-1}F\bigr)(\eta
)=\sum_{x\in\eta}\bigl(K^{-1}H(x,\cdot)\bigr)(\eta\setminus
x),\quad \eta \in \Ga_0.
\label{KinverseSpec}
\end{equation}
We set also
\[
(K_0 G)(\eta):=(KG)(\eta), \qquad\eta\in\Ga_0.
\]

Let us define, for any $\B(\X)$-measurable function $f$, the
so-called coherent state
\[
e_\la (f,\eta ):=\prod_{x\in \eta }f(x) ,\ \eta \in \Ga
_0\!\setminus\!\{\emptyset\},\quad  e_\la (f,\emptyset ):=1.
\]
Then
\begin{equation}\label{Kexp}
(K_0e_\la (f))(\eta)=e_\la(f+1,\eta), \quad \eta\in\Ga_0
\end{equation}
and for any $f\in L^1(\X,dx)$
\begin{equation}\label{LP-exp-mean}
\int_{\Ga_0}e_\la (f,\eta)d\la(\eta)=\exp\Bigl\{\int_\X
f(x)dx\Bigr\}.
\end{equation}

A measure $\mu \in {\mathcal{M}}_{\mathrm{fm} }^1(\Ga )$ is called locally absolutely
continuous with respect to the Poisson measure $\pi$ if for any $\La \in \B_{\mathrm{b}}
(\X)$ the projection of $\mu$ onto $\Ga(\La)$ is absolutely continuous with respect to
the
projection of $ \pi$ onto $\Ga(\La)$. By \cite{KK2002}, in this case, there exists a
\emph{correlation functional} $k_{\mu}:\Ga_0 \rightarrow {\R}_+$ such that for any $G\in
B_{\mathrm{bs}} (\Ga_0)$ the following equality holds
\[
\int_\Ga (KG)(\ga) d\mu(\ga)=\int_{\Ga_0}G(\eta)
k_\mu(\eta)d\la(\eta).
\]
The restrictions $k_\mu^{(n)}$ of this functional on
$\Ga_0^{(n)}$,
$n\in{ \N}_0$ are called \emph{correlation functions} of the
measure
$\mu$. Note that $k_\mu^{(0)}=k_\mu(\emptyset)=1$.

We recall now without a proof the partial case of the well-known technical lemma (see
e.g. \cite{KMZ2004}) which plays very important role in our calculations.

\begin{lemma}
For any measurable function
$H:\Ga_0\times\Ga_0\times
\Ga_0\rightarrow{\R}$
\begin{equation}  \label{minlosid}
\int_{\Ga _{0}}\sum_{\xi \subset \eta }H\left( \xi ,\eta
\setminus
\xi ,\eta \right) d\la \left( \eta \right) =\int_{\Ga
_{0}}\int_{\Ga
_{0}}H\left( \xi ,\eta ,\eta \cup \xi \right) d\la \left( \xi
\right) d\la \left( \eta \right)
\end{equation}
if  both sides of the equality make sense.
\end{lemma}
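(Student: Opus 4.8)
The identity \eqref{minlosid} is a purely combinatorial statement about the binomial structure of the Lebesgue--Poisson measure, and the plan is to prove it by expanding $\la$ level by level on both sides. First I would reduce to the case $H\ge 0$: writing $H=H_+-H_-$ and, where necessary, restricting attention to the set on which the resulting integrals are finite (this is exactly what the proviso ``if both sides make sense'' refers to), it suffices to establish the equality for nonnegative $H$. For such $H$ every interchange of summation and integration below is legitimate by Tonelli's theorem.

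Next, using $\la=\sum_{n\geq 0}\frac{1}{n!}m^{(n)}$ together with the identification of $\bigl(\Ga^{(n)},m^{(n)}\bigr)$, up to the $m^{(n)}$-null generalized diagonal, with the symmetric part of $\bigl(\X^n,(dx)^n\bigr)$, I would rewrite the left-hand side of \eqref{minlosid} as
\[
\sum_{n=0}^\infty \frac{1}{n!}\int_{\X^n}\ \sum_{I\subset\{1,\dots,n\}} H\bigl(\{x_i\}_{i\in I},\,\{x_i\}_{i\notin I},\,\{x_1,\dots,x_n\}\bigr)\,dx_1\cdots dx_n,
\]
since a finite subconfiguration $\xi\subset\eta$ of $\eta=\{x_1,\dots,x_n\}$ is nothing but the choice of an index subset $I$. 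I would then group the inner sum according to $k=|I|$. For fixed $k$ each of the $\binom{n}{k}$ index subsets yields, after the measure-preserving permutation of the integration variables that moves the chosen indices to $1,\dots,k$ (no symmetry of $H$ is needed, only the permutation invariance of $(dx)^n$), the same integral $\int_{\X^{k}}\!\int_{\X^{n-k}} H(\{x_1,\dots,x_k\},\{x_{k+1},\dots,x_n\},\{x_1,\dots,x_n\})$. Using $\frac{1}{n!}\binom{n}{k}=\frac{1}{k!\,(n-k)!}$, the left-hand side becomes $\sum_{n=0}^\infty\sum_{k=0}^n \frac{1}{k!\,(n-k)!}\int_{\X^{k}}\!\int_{\X^{n-k}}H(\dots)$.

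Finally I would put $j=n-k$ and re-sum over $k,j\geq 0$ (permitted since all terms are nonnegative), obtaining
\[
\sum_{k=0}^\infty\sum_{j=0}^\infty \frac{1}{k!}\,\frac{1}{j!}\int_{\X^{k}}\!\int_{\X^{j}} H\bigl(\{x_1,\dots,x_k\},\{y_1,\dots,y_j\},\{x_1,\dots,x_k\}\cup\{y_1,\dots,y_j\}\bigr)\,dy\,dx,
\]
which by the definition of the Lebesgue--Poisson integral on $\Ga_0\times\Ga_0$ is exactly $\int_{\Ga_0}\int_{\Ga_0}H(\xi,\eta,\eta\cup\xi)\,d\la(\xi)\,d\la(\eta)$; note that $\xi$ and $\eta$ are $\la\otimes\la$-a.e.\ disjoint, so $\eta\cup\xi$ there has $|\eta|+|\xi|$ points a.e., consistently with the left-hand side. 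The computation is elementary and I do not anticipate a genuine obstacle---this is the classical Minlos-type lemma (cf.\ \cite{KMZ2004}); the only points demanding a little care are the a.e.\ identification of $\Ga^{(n)}$ with a subset of $\X^n$ (turning configuration sums into index-set sums) and the bookkeeping of the factorials, with the reduction to $H\ge0$ being what legitimizes the Fubini--Tonelli steps and gives precise meaning to ``if both sides make sense''.
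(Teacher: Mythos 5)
Your proof is correct: reducing to $H\ge 0$, expanding $\la=\sum_n\frac{1}{n!}m^{(n)}$, turning the sum over subconfigurations $\xi\subset\eta$ into a sum over index subsets $I\subset\{1,\dots,n\}$, and using the permutation invariance of $(dx)^n$ together with $\frac{1}{n!}\binom{n}{k}=\frac{1}{k!\,(n-k)!}$ and the re-summation $j=n-k$ is exactly the classical proof of this Minlos-type identity, and your remarks on Tonelli and on $\xi\cap\eta=\emptyset$ holding $\la\otimes\la$-a.e.\ cover the only delicate points. The paper itself states the lemma without proof, referring to \cite{KMZ2004}, and your argument coincides with the standard one from that literature, so there is no divergence to discuss.
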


For arbitrary and fixed $C>1$ we consider the functional Banach space
\begin{equation}
\mathcal{L}_{C}:=L^{1}(\Gamma _{0},C^{|\eta |}\lambda (d\eta )).
\label{space1}
\end{equation}
In the sequel, symbol $\left\Vert \cdot \right\Vert _{C}$ stands for the norm of the
space
\eqref{space1}.

Let $d\lambda _{C}:=C^{|\cdot |}d\lambda $, then the
dual space
\[
(\mathcal{L}_{C})'=\bigl(L^{1}(\Gamma _{0},d\lambda
_{C})\bigr)'=L^{\infty }(\Gamma _{0},d\lambda _{C}).
\]
The space $(\L_{C})'$ is isometrically isomorphic to the Banach space
\[
{\mathcal{K}}_{C}:=\left\{ k:\Gamma _{0}\rightarrow {\mathbb{R}}\,\Bigm| k\, C^{-|\cdot
|}\in L^{\infty }(\Gamma _{0},\lambda )\right\}
\]
with the norm
$$
\Vert k\Vert _{{\mathcal{K}}_{C}}:=\Vert C^{-|\cdot
|}k(\cdot )\Vert _{L^{\infty }(\Gamma _{0},\lambda )}
$$
where the
isomorphism is provided by the isometry $R_{C}$
\[
(\mathcal{L}_{C})'\ni k\longmapsto R_{C}k:=k\, C^{|\cdot
|}\in {\mathcal{K}}_{C}.
\]

In fact, one may consider the duality between the Banach
spaces $\mathcal{L}_{C}$ and ${\mathcal{K}}_{C}$ given by the following
expression
\begin{equation}
\left\langle \!\left\langle G,\,k\right\rangle
\!\right\rangle
:=\int_{\Gamma _{0}}G\cdot k\,d\lambda ,\quad G\in
\mathcal{L}_{C},\ k\in {\mathcal{K}}_{C}  \label{duality}
\end{equation}
with $\left\vert \left\langle \!\left\langle G,k\right\rangle
\!\right\rangle \right\vert \leq \Vert G\Vert _{C}\cdot \Vert
k\Vert _{{\mathcal{K}}_{C}}$. It is clear that $k\in {\mathcal{K}}_{C}$
implies
\[
|k(\eta )|\leq \Vert k\Vert _{{\mathcal{K}}_{C}}\,C^{|\eta|}
\qquad
\mathrm{for} \ \lambda\mathrm{-a.a.} \ \eta \in \Gamma _{0}.
\]

In the paper \cite{KKM2008}, it was proposed the analytic approach for the construction
of
non-equilibrium dynamics on $\Ga$, which uses deeply the harmonic analysis on
configuration spaces. By this approach the dynamics of correlation functions
corresponding
to \eqref{FokkerPlanck} is given by the evolutional equation
\begin{equation}
\frac{dk_t}{dt}=L^\triangle k_t, \qquad
k_t\bigm|_{t=0}=k_0\label{corfuncevol},
\end{equation}
where $L^\triangle$ is a dual operator to the $K$-image of $L$ defined by the expression
\[
\hat{L}:=K^{-1}LK
\]
with respect to the duality \eqref{duality}. Hence, $L^\triangle=\hat{L}^\ast$. In order
to construct the evolution of correlation functions we are going to follow such a
scheme:
we show that $\hat{L}$ is a generator of a $C_0$-semigroup in the certain Banach space
and
after consider the dual semigroup which solves the Cauchy problem \eqref{corfuncevol}.

\section{Functional evolutions}

Let
\[
\D :=\bigl\{ G\in \mathcal{L}_{C} \bigm|| \cdot | G(\cdot)\in
\mathcal{L}_{C}\bigr\} .
\]
Note that $B_\mathrm{bs}(\Ga_0) \subset \D$. In particular,
$\D$ is a dense set in $\L_C$.

In \cite{FKK2011a}, we have found sufficient conditions for operator $(\hat{L},\D)$ to
be
a generator of a semigroup in $\LC$. In the case of Markov generators \eqref{est-gen} or
\eqref{fec-gen}, this result may be formulated in the following way.
\begin{lemma}[{Theorem 3.2 of \cite{FKK2011a}}]
Suppose there exists $0<a<\frac{C}{2}$ such
that
\begin{equation}\label{sufcond}
\sum_{x\in\xi} \int_{\Gamma _{0}}\left\vert \left(
K_{0}^{-1}b\left( x,(\xi\setminus
x) \cup \cdot \right) \right)
\left( \eta \right) \right\vert C^{\left\vert
\eta \right\vert }d\lambda \left( \eta \right)
\leq am |\xi|,
\end{equation}
where $b(x,\eta)$ is equal either
\[
e^{-E^{\phi }( x,\eta ) } \sum_{y\in\eta}b_{0}( x,y,\eta \setminus y)\quad \mathit{or}
\quad \sum_{y\in\eta}e^{-E^{\phi } ( y,\eta\setminus y )}b_0( x,y,\eta \setminus y) .
\]
Then $(\hat{L},\D)$ is the generator of
a holomorphic semigroup in $\LC$.
\end{lemma}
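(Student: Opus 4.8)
The plan is to obtain the statement as a direct application of Theorem~3.2 of \cite{FKK2011a}, the only point requiring verification being that the abstract hypothesis of that theorem collapses to \eqref{sufcond} for the generators \eqref{est-gen} and \eqref{fec-gen}. First I would split $\hat L = \hat L_0 + \hat L_1$ into the $K$-images of the death and of the birth parts. For the death part $L_0F(\ga) = m\sum_{x\in\ga}\bigl[F(\ga\setminus x)-F(\ga)\bigr]$ one computes, e.g.\ from \eqref{KinverseSpec} with $H(x,\ga):=F(\ga\setminus x)-F(\ga)$, that
\[
(\hat L_0 G)(\eta) = -m|\eta|\,G(\eta),\qquad G\in\D,
\]
so that $\Dom(\hat L_0)=\D$ and, since $-m|\eta|\le 0$, $(\hat L_0,\D)$ generates the contraction semigroup $\bigl(e^{-mt|\cdot|}\bigr)_{t\ge0}$ on $\LC$, which extends to a bounded holomorphic semigroup on the right half-plane. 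The crucial bookkeeping remark is that the ``total death intensity'' $D(\eta):=\sum_{x\in\eta}d(x,\eta\setminus x)$, which in \cite{FKK2011a} controls the admissible size of the birth part, equals here exactly $m|\eta|$ --- precisely the quantity standing on the right-hand side of \eqref{sufcond}.

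Second, I would recall the mechanism of Theorem~3.2 of \cite{FKK2011a} for the birth part. Using $D_x^+(KG)(\ga)=K\bigl[G(\cdot\cup x)\bigr](\ga)$ together with the product rule for $K^{-1}$ and the Minlos identity \eqref{minlosid}, one obtains the explicit ``gain'' representation
\[
(\hat L_1 G)(\eta)=\int_\X\sum_{\xi\subseteq\eta}\bigl(K_0^{-1}b(x,\cdot)\bigr)(\xi)\,G\bigl((\eta\setminus\xi)\cup x\bigr)\,dx,
\]
with $b$ the birth rate from the statement (either of the two expressions). Estimating $\Vert\hat L_1G\Vert_C=\int_{\Ga_0}|\hat L_1G(\eta)|C^{|\eta|}d\la(\eta)$ by pushing the weight $C^{|\eta|}$ through \eqref{minlosid} and then integrating out the newly born point $x$ via the one-point addition formula (so that a factor $C^{-1}$ and a factor $|\zeta|$ appear), one reduces the whole estimate to controlling $\sum_{x\in\zeta}\int_{\Ga_0}\bigl|(K_0^{-1}b(x,(\zeta\setminus x)\cup\cdot))(\eta)\bigr|C^{|\eta|}d\la(\eta)$; hypothesis \eqref{sufcond} bounds this by $am|\zeta|$, whence $\Vert\hat L_1G\Vert_C\le\tfrac{2a}{C}\,\Vert m|\cdot|G\Vert_C=\tfrac{2a}{C}\,\Vert\hat L_0G\Vert_C$, i.e.\ $\hat L_1$ is $\hat L_0$-bounded with relative bound $2a/C<1$. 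The perturbation theorem for generators of holomorphic semigroups (which is the content of Theorem~3.2 of \cite{FKK2011a}) then yields that $(\hat L,\D)=(\hat L_0+\hat L_1,\D)$ generates a holomorphic semigroup in $\LC$.

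The step I expect to be the main obstacle is the second one: extracting the correct explicit form of $\hat L_1$ and running the norm estimate so that the change of weight $C^{|\eta|}\mapsto C^{|\eta|+1}$ caused by a birth is tracked exactly --- this is what produces the factor $\tfrac2C$ and hence the requirement $0<a<\tfrac C2$ rather than a weaker bound, and it is also the place where one must keep enough control to conclude holomorphy (sectoriality of $\hat L$), not merely strong continuity. However, this perturbation analysis is carried out once and for all in \cite{FKK2011a} for a general birth rate $b$; the only work genuinely specific to the present lemma is the identification $D(\eta)=m|\eta|$, which turns the general sufficient condition of \cite{FKK2011a} into \eqref{sufcond}.
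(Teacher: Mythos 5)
The paper does not prove this lemma at all: it is imported verbatim as Theorem~3.2 of \cite{FKK2011a}, and the only content specific to the present setting is exactly what you identify, namely that the death rate is the constant $m$, so the total death intensity is $D(\xi)=m|\xi|$ and the abstract hypothesis of the cited theorem collapses to \eqref{sufcond}. Your reconstruction of the mechanism inside \cite{FKK2011a} (death part $=$ multiplication by $-m|\eta|$, which generates a holomorphic contraction semigroup on $\LC$; birth part relatively bounded through \eqref{sufcond} via the Minlos identity and the one-point integration, so that a factor $C^{-1}$ and $|\zeta|$ appear; then a perturbation theorem for holomorphic generators) is the same route as the reference, so in substance your proposal matches the paper's (implicit) argument.

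Two small inaccuracies in your sketch are worth flagging. First, your displayed ``gain'' representation of $\hat L_1$ is not correct as written: the correct expression, recorded in the paper as \eqref{L_desc_expr}, is
\begin{equation*}
(\hat L_1 G)(\eta)=\sum_{\xi\subset\eta}\int_{\R^d}G(\xi\cup x)\,\bigl(K_0^{-1}b(x,\cdot\cup\xi)\bigr)(\eta\setminus\xi)\,dx,
\end{equation*}
i.e.\ the retained points $\xi$ must be re-inserted into the second argument of $b$ \emph{before} applying $K_0^{-1}$ (this is the $\star$-convolution structure of $K^{-1}(b\,\cdot KG)$); with the formula you wrote, the Minlos-identity estimate would lead to a condition on $K_0^{-1}b(x,\cdot)$ alone rather than to \eqref{sufcond}. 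Your subsequent norm estimate does use the correct object $K_0^{-1}b(x,(\zeta\setminus x)\cup\cdot)$, so this is a notational slip rather than a conceptual gap, but as stated the two lines are inconsistent. Second, carrying out the estimate you describe gives $\Vert\hat L_1G\Vert_C\le\frac{a}{C}\Vert\,m|\cdot|\,G\Vert_C$, i.e.\ relative bound $a/C<\tfrac12$, not $2a/C$; the threshold $a<\tfrac C2$ (and the quantity $2a/C$ appearing later in the invariance statement for $\overline{\K_{\alpha C}}$) is part of the constant bookkeeping inside \cite{FKK2011a} needed for holomorphy and for the dual-semigroup constructions, and cannot be recovered from the relative-boundedness computation alone. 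Neither point affects your conclusion, since the analytic work is carried by the cited theorem.
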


It is worth noting that if \eqref{sufcond} is valid, then for any $G\in\D$
\begin{equation}\label{L_desc_expr}
\bigl( \hat{L}G\bigr) \left( \eta \right) =-m|\eta| G(\eta) + \sum_{\xi \subset \eta
}\int_{\mathbb{R}^{d}}\,G(\xi \cup x)\bigl(K_0^{-1}b(x,\cdot\cup\xi)\bigr)(\eta\setminus
\xi) dx. \end{equation}

\begin{theorem}\label{th-est}
Let $0\leq a^+, b^+, \phi \in L^{1}(\R^{d})$ be even functions such that
\eqref{int_cond}
holds and $\langle a^+\rangle=1$, $B:=\langle b^+\rangle\geq0$. Suppose, additionally,
that there exist constants $A_{1},A_{2}\geq0$ such that
\begin{eqnarray}
0\leq a^+(x)&\leq& A_{1}\phi(x), \qquad
x\in\X,\label{subdominate}\\
a^+(x-y)b^+(y-y')&\leq&
A_{2}\phi(x-y)\phi(x-y'), \qquad x,y,y'\in\X,
\label{specsubdominate}
\\
 \frac{A_{1}\varkappa^+}{eC}
+\frac{4A_{2}}{e^2C} &+&
\frac{A_1 B}{e} + \varkappa^+  + \frac{A_2\langle \phi
\rangle}{e} + C^{}B< \frac{m}{2}e^{-c_{\phi }C}
.\label{bigmort}
\end{eqnarray}
Then \eqref{sufcond}
holds and $\bigl(\hLe =K^{-1}\Le K,\D)$ is the generator of
a holomorphic semigroup $\hat{U}_\mathrm{est}(t)$
in $\LC$.
\end{theorem}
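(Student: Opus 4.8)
The strategy is to verify that the single hypothesis \eqref{sufcond} of the cited Theorem~3.2 of \cite{FKK2011a} holds for the establishment birth rate, i.e.\ for
\[
b(x,\eta)=e^{-E^{\phi}(x,\eta)}\sum_{y\in\eta}b_{0}(x,y,\eta\setminus y);
\]
everything else --- the description \eqref{L_desc_expr} and the holomorphic generation of $\hLe$ on $\LC$ --- is then quoted verbatim from that theorem. I will carry out the density dependent dispersion $b_{0}(x,y,\gamma\setminus y)=a^{+}(x-y)\bigl(\varkappa^{+}+\sum_{y'\in\gamma\setminus y}b^{+}(y-y')\bigr)$; the density independent case is the specialisation $b^{+}\equiv0$, $B=0$, and is strictly simpler. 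Throughout put $u_{x}(\cdot):=e^{-\phi(x-\cdot)}$, so that $0\le u_{x}\le1$, $e^{-E^{\phi}(x,\eta)}=e_{\la}(u_{x},\eta)$ for $x\notin\eta$ (the set $\{x\in\eta\}$ being $dx$-null, this suffices), and $\langle 1-u_{x}\rangle=c_{\phi}$ by \eqref{int_cond}.

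The first step is an explicit computation of $K_{0}^{-1}$ of the map $\eta\mapsto b\bigl(x,(\xi\setminus x)\cup\eta\bigr)$. Writing $\zeta:=\xi\setminus x$ and expanding
\[
b(x,\eta)=\varkappa^{+}e_{\la}(u_{x},\eta)\sum_{y\in\eta}a^{+}(x-y)+e_{\la}(u_{x},\eta)\sum_{y\in\eta}\sum_{y'\in\eta\setminus y}a^{+}(x-y)b^{+}(y-y'),
\]
one substitutes $\eta\rightsquigarrow\zeta\cup\eta$, splits each inner sum according to whether the summation points fall in $\zeta$ or in $\eta$, and uses the multiplicativity $e_{\la}(f,\zeta\cup\eta)=e_{\la}(f,\zeta)\,e_{\la}(f,\eta)$. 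Then $K_{0}^{-1}$ is applied in the variable $\eta$ term by term, using $K_{0}^{-1}e_{\la}(f)=e_{\la}(f-1)$ (the inverse of \eqref{Kexp}), the once- and twice-iterated identity \eqref{KinverseSpec}, and $e_{\la}(u_{x},\eta)=u_{x}(y)u_{x}(y')\,e_{\la}(u_{x},\eta\setminus\{y,y'\})$ for $y,y'\in\eta$. This yields $K_{0}^{-1}b(x,\zeta\cup\cdot)$ as a sum of six terms, each of the form
\[
e_{\la}(u_{x},\zeta)\cdot\bigl(\text{a sum of products of }a^{+},b^{+}\text{ indexed by points of }\zeta\text{ and }\eta\bigr)\cdot e_{\la}(u_{x}-1,\eta\setminus\sigma),
\]
where $\sigma\subset\eta$ has at most two points and is accompanied by the corresponding factors $u_{x}$.

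The second step is the estimation. Taking absolute values (so $|e_{\la}(u_{x}-1,\cdot)|=e_{\la}(1-u_{x},\cdot)$ and $0\le e_{\la}(u_{x},\cdot)\le1$) and integrating against $C^{|\eta|}d\la(\eta)$, one uses the identity \eqref{minlosid} and its one- and two-point specialisations (which extract a factor $C$, resp.\ $C^{2}$), together with $\int_{\Ga_{0}}e_{\la}(1-u_{x},\eta)C^{|\eta|}d\la(\eta)=e^{c_{\phi}C}$, a consequence of \eqref{LP-exp-mean}; finally one sums over $x\in\xi$. The place where care is needed is that $e_{\la}(u_{x},\zeta)=\exp\{-\sum_{y\in\zeta}\phi(x-y)\}$ must be kept and combined with the $\zeta$-sums: after inserting \eqref{subdominate} to replace $a^{+}$ by $A_{1}\phi$ and \eqref{specsubdominate} to replace $a^{+}(x-y)b^{+}(y-y')$ by $A_{2}\phi(x-y)\phi(x-y')$, these combinations reduce to $\bigl(\sum_{y\in\zeta}\phi(x-y)\bigr)^{j}\exp\{-\sum_{y\in\zeta}\phi(x-y)\}$ with $j\in\{0,1,2\}$, bounded respectively by $1$, $e^{-1}$, $4e^{-2}$ (the maxima of $t^{j}e^{-t}$), while the remaining $\eta$-integrals produce $\int_{\X}a^{+}e^{-\phi}\le1$, $\int_{\X}\phi e^{-\phi}\le\langle\phi\rangle$, and $\iint a^{+}(x-y)b^{+}(y-y')\,dy\,dy'=B$. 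Collecting the six contributions I expect the bound
\[
\sum_{x\in\xi}\int_{\Ga_{0}}\bigl|\bigl(K_{0}^{-1}b(x,(\xi\setminus x)\cup\cdot)\bigr)(\eta)\bigr|\,C^{|\eta|}d\la(\eta)\le C\,e^{c_{\phi}C}\Bigl(\tfrac{A_{1}\varkappa^{+}}{eC}+\tfrac{4A_{2}}{e^{2}C}+\tfrac{A_{1}B}{e}+\varkappa^{+}+\tfrac{A_{2}\langle\phi\rangle}{e}+CB\Bigr)|\xi|,
\]
and hypothesis \eqref{bigmort} says precisely that the bracket is $<\frac{m}{2}e^{-c_{\phi}C}$; hence the right-hand side is $<\frac{C}{2}m|\xi|$, so \eqref{sufcond} holds with a suitable $a\in(0,\frac{C}{2})$, and the cited theorem gives the conclusion.

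The main obstacle is the bookkeeping in the first step: enumerating correctly the cross-terms that the substitution $\eta\rightsquigarrow(\xi\setminus x)\cup\eta$ generates in the double sum over $y,y'$. In the second step the only real subtlety is the one flagged above --- one must not bound $e_{\la}(u_{x},\xi\setminus x)$ by $1$ before pairing it with the $\phi$-sums over $\xi\setminus x$, for otherwise one is left with $E^{a^{+}}$-type sums over $\xi$, which are not $O(|\xi|)$, and the criterion \eqref{sufcond} could not be met; matching the six resulting terms to the six summands of \eqref{bigmort} is then routine.
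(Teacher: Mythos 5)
Your proposal follows essentially the same route as the paper's proof: verify \eqref{sufcond} for $b_{\mathrm{est}}$ by expanding $b_{\mathrm{est}}(x,(\xi\setminus x)\cup\eta)$ according to whether the summation points lie in $\xi\setminus x$ or $\eta$, apply $K_0^{-1}$ via \eqref{k-1trans}--\eqref{Kexp}, integrate with \eqref{minlosid} and \eqref{LP-exp-mean} to produce the factor $e^{c_\phi C}$, and control the $\xi$-sums by pairing them with $e^{-E^\phi(x,\xi)}$ and using the maxima of $t e^{-t}$ and $t^2 e^{-t}$, so that the six resulting terms match \eqref{bigmort} and Theorem~3.2 of \cite{FKK2011a} applies. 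The subtlety you flag (not discarding $e^{-E^\phi(x,\xi)}$ before combining with the $\phi$-sums over $\xi$) is exactly the point the paper's estimate turns on, and your collected bound coincides with the paper's constant $D$.
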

\begin{remark}
In the density independent case, $b^+\equiv0$, the assumption \eqref{specsubdominate}
holds with $A_2=0$. Moreover, since
$B=0$, the condition \eqref{bigmort} will have the following form
\[
 \frac{A_{1}\varkappa^+}{eC} +\varkappa^+ < \frac{m}{2}e^{-c_{\phi }C}.
\]
\end{remark}

Before proof of Theorem~\ref{th-est}, we give an example of $a^+$, $b^+$ which satisfy
\eqref{specsubdominate} in the Lemma below.
\begin{lemma}\label{example-sd}
Suppose that
there exist constants $E_{1},E_{2}>0$ and $\delta>d$ such
that
\[
  a^+( x ) \leq \frac{E_{1}}{\left( 1+\left\vert x\right\vert \right) ^{2\delta  }},
  \qquad {b^+} ( x )   \leq \frac{E_{1}}{\left( 1+\left\vert x\right\vert \right)
  ^{\delta
  }}\leq E_2\phi(x), \qquad x\in\X.
\]
Then \eqref{specsubdominate} holds with $A_2=E_2^2$.
\end{lemma}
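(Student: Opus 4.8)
The plan is to strip \eqref{specsubdominate} down to an elementary inequality. First I would change variables, writing $u:=x-y$ and $v:=x-y'$, so that $y-y'=v-u$ and the claimed bound becomes
\[
a^+(u)\,b^+(v-u)\ \le\ A_2\,\phi(u)\,\phi(v),\qquad u,v\in\X .
\]
This is now a statement about two free arguments rather than three, which makes it accessible to a direct estimate.

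Next I would use the two halves of the hypothesis separately. The inequality $\frac{E_{1}}{(1+|x|)^{\delta}}\le E_{2}\,\phi(x)$ furnishes the pointwise \emph{lower} bound $\phi(x)\ge \frac{E_{1}}{E_{2}}(1+|x|)^{-\delta}$ for every $x\in\X$, while the two displayed \emph{upper} bounds give $a^+(u)\le E_{1}(1+|u|)^{-2\delta}$ and $b^+(v-u)\le E_{1}(1+|v-u|)^{-\delta}$. Substituting all three into the reduced inequality and taking $A_2=E_2^2$, the constants $E_1,E_2$ cancel exactly and one is left with the requirement
\[
(1+|u|)^{\delta}\,(1+|v-u|)^{\delta}\ \ge\ (1+|v|)^{\delta},
\]
that is, $1+|v|\le (1+|u|)(1+|v-u|)$.

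Finally this last inequality is immediate: expanding the right-hand side gives $1+|u|+|v-u|+|u|\,|v-u|\ge 1+|u|+|v-u|\ge 1+|u+(v-u)|=1+|v|$ by the triangle inequality. That triangle-inequality step is the only nontrivial ingredient, and it is completely routine; the genuine (but modest) content of the lemma is the bookkeeping that pairs the exponent $2\delta$ in the decay of $a^+$ with the single extra power $\delta$ that the lower bound on $\phi$ ``loses'' on the $v$-side. I would also note in passing that $\delta>d$ plays no role in this pointwise estimate — it serves only to keep $a^+,b^+\in L^{1}(\X)$ (and to make the forced lower bound on $\phi$ compatible with $\phi\in L^{1}(\X)$), as required for applying Theorem~\ref{th-est}. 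Accordingly, no real obstacle is anticipated.
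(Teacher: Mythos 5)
Your proof is correct and is essentially the paper's own argument: after your change of variables $u=x-y$, $v=x-y'$, your key inequality $1+|v|\le(1+|u|)(1+|v-u|)$ is exactly the paper's $1+|x-y'|\le(1+|x-y|)(1+|y-y'|)$, and both proofs split the exponent $2\delta$ as $\delta+\delta$ and apply the bound $E_1(1+|\cdot|)^{-\delta}\le E_2\phi$ to each of the two resulting factors. No gaps.
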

\begin{proof}[Proof of Lemma \ref{example-sd}] Using obvious
inequality
\[
1+\left\vert x-y'\right\vert \leq 1+\left\vert x-y\right\vert
+\left\vert y-y'\right\vert \leq \left( 1+\left\vert
x-y\right\vert
\right) \left( 1+\left\vert y-y'\right\vert \right)
\]
we obtain that
\[
a^+ ( x-y )  {b^+}\left( y-y'\right) \leq \frac{E_{1}}{\left(
1+\left\vert x-y\right\vert \right) ^{\delta
}}\frac{E_{1}}{\left(
1+\left\vert x-y'\right\vert \right) ^{\delta }}\leq
E_2^2\phi(x-y)\phi(x-y'),
\]
that proves the statement.
\end{proof}

\begin{proof}[Proof of Theorem \ref{th-est}]
Let us set
\begin{equation}\label{b_est-expr}
b_{\mathrm{est}}(x,\ga)=e^{-E^{\phi }( x,\gamma )
}\sum_{y\in\ga}a^+(x-y)\biggl(
\varkappa^++\sum_{y'\in\ga\setminus y} b^+(y-y')\biggr).
\end{equation}
To check \eqref{sufcond}, we will try to estimate the integral \[
\int_{\Gamma _{0}}\left\vert \left( K_{0}^{-1}b_{\mathrm{est}}\left( x,\xi \cup \cdot
\right) \right) \left( \eta \right) \right\vert C^{\left\vert \eta \right\vert }d\lambda
\left( \eta \right), \qquad \xi\in\Ga_0
\]
uniformly in $x\in\X$ and $\xi\in\Ga_0$.

In view of \eqref{b_est-expr}, one has
\begin{eqnarray*}
b_{\mathrm{est}}\left( x,\xi \cup \eta \right)
&=& e^{-E^{\phi
}(x,\xi )}e^{-E^{\phi }(x,\eta )}\sum_{y\in \xi
}a^{+}(x-y)\biggl(
\varkappa ^{+}+\sum_{y'\in \xi \setminus
y}b^{+}(y-y')\biggr) \nonumber\\
&&+e^{-E^{\phi }(x,\xi )}e^{-E^{\phi }(x,\eta )}\sum_{y'\in
\eta
}\sum_{y\in \xi }a^{+}(x-y)b^{+}(y-y') \nonumber\\
&&+e^{-E^{\phi }(x,\xi )}e^{-E^{\phi }(x,\eta )}\sum_{y'\in
\eta
}a^{+}(x-y')\biggl( \varkappa ^{+}+\sum_{y\in \xi
}b^{+}(y-y')\biggr) \nonumber\\
&&+e^{-E^{\phi }(x,\xi )}e^{-E^{\phi }(x,\eta )}\sum_{y\in
\eta
}a^{+}(x-y)\sum_{y'\in \eta \setminus y}b^{+}(y-y').\nonumber
\end{eqnarray*}
Using \eqref{k-1trans}--\eqref{Kexp}, we obtain
\begin{eqnarray}
  &&\left( K_{0}^{-1}b_{\mathrm{est}}\left( x,\xi \cup \cdot \right) \right) \left( \eta
  \right) \label{term-est}\\ &=&\,e_{\lambda }\left( e^{-\phi (x-\cdot) }-1,\eta \right)
  b_{\mathrm{est}}\left( x,\xi \right) \nonumber \\ &&+e^{-E^{\phi }(x,\xi )}\sum_{y'\in
  \eta }\sum_{y\in \xi }a^{+}(x-y)b^{+}(y-y')e^{-\phi (x-y') }e_{\lambda }\left(
  e^{-\phi
  (x-\cdot) }-1,\eta \setminus y'\right) \nonumber \\ &&+e^{-E^{\phi }(x,\xi
  )}\sum_{y'\in
  \eta }e_{\lambda }\left( e^{-\phi (x-\cdot) }-1,\eta \setminus y'\right)
  a^{+}(x-y^{\prime })e^{-\phi (x-y')}\nonumber \\&&\qquad\qquad\times \biggl( \varkappa
  ^{+}+\sum_{y\in \xi }b^{+}(y-y')\biggr) \nonumber \\ &&+e^{-E^{\phi }(x,\xi )}\sum_{y
  \in \eta }\sum_{y' \in \eta\setminus y }a^{+}(x-y)b^{+}(y-y')e^{-\phi (x-y) }e^{-\phi
  (x-y') }\nonumber \\&&\qquad\qquad\times e_{\lambda }\left( e^{-\phi (x-\cdot)
  }-1,\eta
  \setminus \left\{ y,y'\right\} \right).\nonumber
\end{eqnarray}
Next, let $\kappa =e^{c_{\phi }C}$ then, by \eqref{LP-exp-mean},
\begin{eqnarray*}
  &&\int_{\Gamma _{0}}\left\vert \left( K_{0}^{-1}b_{\mathrm{est}}\left( x,\xi \cup
  \cdot
  \right) \right) \left( \eta \right) \right\vert C^{\left\vert \eta \right\vert
  }d\lambda
  \left( \eta \right)\nonumber  \\ &\leq &\,\kappa b_{\mathrm{est}}\left( x,\xi \right)
  +Ce^{-E^{\phi }(x,\xi )}\int_{\Gamma _{0}}\int_{\mathbb{R}^{d}}\sum_{y\in \xi
  }a^{+}(x-y)b^{+}(y-y')e^{-\phi (x-y') } \nonumber \\ &&\qquad\qquad\times e_{\lambda
  }\left( \left\vert e^{-\phi (x-\cdot) }-1\right\vert ,\eta \right) C^{\left\vert \eta
  \right\vert }dy'd\lambda \left( \eta \right) \nonumber \\ &&+Ce^{-E^{\phi }(x,\xi
  )}\int_{\Gamma _{0}}\int_{\mathbb{R}^{d}}e_{\lambda }\left( \left\vert e^{-\phi
  (x-\cdot) }-1\right\vert ,\eta \right) a^{+}(x-y')e^{-\phi (x-y') } \nonumber \\
  &&\qquad\qquad\times \biggl( \varkappa ^{+}+\sum_{y\in \xi }b^{+}(y-y')\biggr)
  C^{\left\vert \eta \right\vert }dy'd\lambda \left( \eta \right) \nonumber \\
  &&+C^{2}e^{-E^{\phi }(x,\xi )}\int_{\Gamma _{0}}\int_{\mathbb{R}^{d}}\int_{
  \mathbb{R}^{d}}a^{+}(x-y)b^{+}(y-y')e^{-\phi (x-y) }e^{-\phi (x-y') }\nonumber  \\
  &&\qquad\qquad\times e_{\lambda }\left( \left\vert e^{-\phi (x-\cdot) }-1\right\vert
  ,\eta \right) C^{\left\vert \eta \right\vert }dy'dyd\lambda \left( \eta \right)
  \nonumber \\ &\leq &\,\kappa b_{\mathrm{est}}\left( x,\xi \right) +\kappa B
  Ce^{-E^{\phi
  }(x,\xi )}\sum_{y\in \xi }a^{+}(x-y) \nonumber \\ &&+\kappa Ce^{-E^{\phi }(x,\xi
  )}\varkappa ^{+}\left\langle a^{+}e^{-\phi }\right\rangle \nonumber \\ && +\kappa
  Ce^{-E^{\phi }(x,\xi )}\sum_{y\in \xi }\int_{\mathbb{R }^{d}}a^{+}(x-y')e^{-\phi
  (x-y')
  }b^{+}(y-y')dy' \nonumber \\ &&+\kappa C^{2}e^{-E^{\phi }(x,\xi
  )}\int_{\mathbb{R}^{d}}\int_{\mathbb{R} ^{d}}a^{+}(x-y)b^{+}(y-y')e^{-\phi (x-y)
  }e^{-\phi (x-y') }dy'dy.\nonumber
\end{eqnarray*}
By \eqref{subdominate}, one has
\[
e^{-E^{\phi }(x,\xi )} \sum_{y\in \xi
}a^{+}(x-y)\leq A _1e^{-E^{\phi }(x,\xi )} E^{\phi }(x,\xi
)\leq \frac{A_1}{e},
\]
where we used the elementary inequality $xe^{-x}\leq e^{-1}$,
$x\geq0$. Next, by \eqref{specsubdominate}, we may estimate
\begin{eqnarray*}
&&e^{-E^{\phi }(x,\xi )}\sum_{y\in \xi }\int_{\mathbb{R
}^{d}}a^{+}(x-y') e^{-\phi (x-y') }b^{+}(y-y')dy'\\&\leq &
\, A_2e^{-E^{\phi }(x,\xi )}\sum_{y\in \xi }\int_\X\phi(x-y)\phi(x-y')dy'\leq
\frac{A_2\langle\phi\rangle}{e}.
\end{eqnarray*}
Moreover, \eqref{subdominate}, \eqref{specsubdominate} yield
\begin{eqnarray}
b_{\mathrm{est}}(x,\xi)&\leq &\, A_1 \varkappa^+e^{-E^{\phi }(
x,\xi ) }\sum_{y\in\xi}\phi(x-y)\nonumber
 \\ &&+A_2 e^{-E^{\phi }( x,\xi )
 }\sum_{y\in\xi}\phi(x-y)\sum_{y'\in\xi\setminus y}
 \phi(x-y')\nonumber\\
&\leq& \frac{A_1\varkappa^+}{e}+A_{2} e^{-E^{\phi }( x,\xi )
}\bigl((E^{\phi }( x,\xi )\bigr)^2\leq \frac{A_{1}\varkappa^+}{e}
+\frac{4A_{2}}{e^2},\nonumber
\end{eqnarray}
since $x^2e^{-x}\leq 4e^{-2}$, $x\geq0$.

Therefore, we have
\begin{eqnarray*}
&&\int_{\Gamma _{0}}\left\vert \left(
K_{0}^{-1}b_{\mathrm{est}}\left( x,\xi \cup \cdot \right)
\right)
\left( \eta \right) \right\vert C^{\left\vert
\eta \right\vert }d\lambda \left( \eta \right)\\
&\leq &\kappa \biggl(\frac{A_{1}\varkappa^+}{e}
+\frac{4A_{2}}{e^2}\biggr) +\kappa
CB\frac{A_1}{e} +\kappa C\varkappa^+  +\kappa
C\frac{A_2\langle \phi \rangle}{e} +\kappa C^{2}B=:D.
\end{eqnarray*}
To obtain \eqref{sufcond}, it
is enough to suppose that $D\leq a m$, where
$\frac{a}{C}<\frac{1}{2}$. Hence, we need that
$m>\frac{2D}{C}$ only, that is \eqref{bigmort}.
The theorem is proved.
\end{proof}

\begin{theorem}\label{th-fec}
Let $0\leq a^+, b^+, \phi \in L^{1}(\R^{d})$ be even functions such that
\eqref{int_cond}
holds and $\langle a^+\rangle=1$, $B=\langle b^+\rangle\geq0$. Suppose, additionally,
that
there exists constants
$A_{1},A_{2}\geq0$
such that for a.a. $x,y,y'\in\X$
\begin{eqnarray}
0\leq a^+(x)&\leq&
A_{1}\phi(x)e^{-\phi(x)},\label{subdominate-fec}\\
b^+(x)&\leq& A_2 \phi(x), \label{subdominate-fec-b+}\\
\varkappa ^{+}+\frac{
 A_2}{e}&+& C B+\Bigl(\frac{\varkappa ^{+}}{C}+
B\Bigr) \frac{A_1}{e} +\frac{4A_1A_2}{e^2}C<
\frac{m}{2}e^{-c_{\phi }C} .\label{big-mort-fec}
\end{eqnarray}
Then \eqref{sufcond}
holds and $\bigl(\hLf =K^{-1}\Lf K,\D)$ is the generator of
a holomorphic semigroup $\hat{U}_\mathrm{fec}(t)$
in $\LC$.
\end{theorem}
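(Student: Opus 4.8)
The plan is to verify the sufficient condition \eqref{sufcond} for the fecundity birth rate and then to invoke the cited Lemma (Theorem~3.2 of \cite{FKK2011a}). I would put
\[
b_{\mathrm{fec}}(x,\ga)=\sum_{y\in\ga}e^{-E^{\phi}(y,\ga\setminus y)}a^{+}(x-y)\Bigl(\varkappa^{+}+\sum_{y'\in\ga\setminus y}b^{+}(y-y')\Bigr),
\]
the density dependent rate (the density independent one being $b^{+}\equiv0$), fix $x\in\xi$, write $\xi':=\xi\setminus x$, and expand $b_{\mathrm{fec}}(x,\xi'\cup\eta)$ into four families of summands according to whether the ``parent'' $y$ lies in $\xi'$ or in $\eta$ and whether the summation point $y'$ of the $b^{+}$--sum lies in $\xi'$ or in $\eta$ (the constant $\varkappa^{+}$ being attached to the $\xi'$--family). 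Using $E^{\phi}(y,(\xi'\cup\eta)\setminus y)=E^{\phi}(y,\xi'\setminus y)+E^{\phi}(y,\eta)$ for $y\in\xi'$ (and its analogue for $y\in\eta$), the exponential weight splits into a factor depending only on $\xi'$ and a coherent--state factor $e^{-E^{\phi}(y,\eta)}=e_{\la}(e^{-\phi(y-\cdot)},\eta)$.

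Next I would apply $K_{0}^{-1}$ in the $\eta$ variable by means of \eqref{k-1trans}, \eqref{KinverseSpec}, \eqref{Kexp}: $K_{0}^{-1}$ sends $e_{\la}(e^{-\phi(y-\cdot)},\cdot)$ to $e_{\la}(e^{-\phi(y-\cdot)}-1,\cdot)$, sends $e^{-E^{\phi}(y,\cdot)}\sum_{y'\in\cdot}b^{+}(y-y')$ to $\sum_{y'\in\cdot}b^{+}(y-y')e^{-\phi(y-y')}e_{\la}(e^{-\phi(y-\cdot)}-1,\cdot\setminus y')$, and, when the parent $y$ itself ranges over $\eta$, extracts a further summation. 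Then, taking absolute values (here $|e^{-\phi(y-z)}-1|=1-e^{-\phi(y-z)}$ since $\phi\ge0$) and integrating against $C^{|\eta|}\,d\la(\eta)$, by \eqref{LP-exp-mean} each coherent--state factor contributes exactly $\int_{\Ga_{0}}e_{\la}\bigl(C(1-e^{-\phi(y-\cdot)}),\eta\bigr)\,d\la(\eta)=e^{c_{\phi}C}=:\kappa$, independently of $y$ and $\xi$; each extracted sum $\sum_{y'\in\eta}$ or $\sum_{y\in\eta}$ turns, via the Minlos-type identity \eqref{minlosid}, into an integral over $\X$ with an extra factor $C$; and $\int_{\X}a^{+}(x-y)\,dy=1$, $\int_{\X}b^{+}(y-y')\,dy'=B$ remove the remaining densities. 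For the two families in which the parent $y$ belongs to $\eta$ this already gives bounds uniform in $x$ and $\xi$, of the form $\kappa C\varkappa^{+}$, $\kappa C A_{2}e^{-1}$, $\kappa C^{2}B$: one keeps the weight $e^{-E^{\phi}(y,\xi')}$, uses $b^{+}\le A_{2}\phi$ (\eqref{subdominate-fec-b+}) so that $\sum_{y'\in\xi'}b^{+}(y-y')\le A_{2}E^{\phi}(y,\xi')$, and applies $te^{-t}\le e^{-1}$.

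The hard part will be the two families in which the parent $y$ lies in the \emph{fixed} part $\xi'$, which is exactly where the fecundity model diverges from the establishment model. Unlike $b_{\mathrm{est}}(x,\xi)$, the quantity $b_{\mathrm{fec}}(x,\xi')$ carries the suppression $e^{-E^{\phi}(y,\xi'\setminus y)}$ attached to the \emph{parent} $y$ rather than to the newborn site $x$, so it is \emph{not} controllable uniformly in $\xi'$ for a single $x$ and the naive per--$x$ estimate fails. The remedy, which is also the reason for the stronger hypothesis \eqref{subdominate-fec}, is to rewrite, for $y\in\xi'$,
\[
e^{-E^{\phi}(y,\xi'\setminus y)}=e^{\phi(x-y)}\,e^{-E^{\phi}(y,\xi\setminus y)},\qquad\text{hence}\qquad a^{+}(x-y)\,e^{-E^{\phi}(y,\xi'\setminus y)}\le A_{1}\,\phi(x-y)\,e^{-E^{\phi}(y,\xi\setminus y)},
\]
and only then to perform the summation $\sum_{x\in\xi}$ required by \eqref{sufcond}: after interchanging the order, $\sum_{x\in\xi}\sum_{y\in\xi\setminus x}$ becomes $\sum_{y\in\xi}\sum_{x\in\xi\setminus y}$, and for each fixed parent $y$ the inner sum $\sum_{x\in\xi\setminus y}\phi(x-y)=E^{\phi}(y,\xi\setminus y)$ reconstructs precisely the quantity sitting in the exponent. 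Bounding also $\sum_{y'\in\xi\setminus y}b^{+}(y-y')\le A_{2}E^{\phi}(y,\xi\setminus y)$ and using $te^{-t}\le e^{-1}$, $t^{2}e^{-t}\le4e^{-2}$ once more, this family contributes at most a constant times $|\xi|$, of order $\kappa C\bigl[(\tfrac{\varkappa^{+}}{C}+B)\tfrac{A_{1}}{e}+\tfrac{4A_{1}A_{2}}{e^{2}}\bigr]|\xi|$.

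Putting the estimates together gives $\sum_{x\in\xi}\int_{\Ga_{0}}\bigl|\bigl(K_{0}^{-1}b_{\mathrm{fec}}(x,(\xi\setminus x)\cup\cdot)\bigr)(\eta)\bigr|\,C^{|\eta|}\,d\la(\eta)\le D\,|\xi|$, where $D$ may be taken to be $Ce^{c_{\phi}C}$ times the left--hand side of \eqref{big-mort-fec}. By \eqref{big-mort-fec} this means $D<\tfrac{C}{2}\,m$, so \eqref{sufcond} holds with $a:=D/m\in(0,\tfrac{C}{2})$; the cited Lemma then yields that $(\hLf=K^{-1}\Lf K,\D)$ is the generator of a holomorphic semigroup in $\LC$ (with the explicit action \eqref{L_desc_expr}), which is the claim. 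The one genuinely non-routine point is the cancellation $a^{+}(x-y)e^{\phi(x-y)}\le A_{1}\phi(x-y)$ combined with the reordering of $\sum_{x\in\xi}$; the remainder is bookkeeping of the constants and of the powers of $C$.
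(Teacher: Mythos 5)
Your proposal is correct and follows essentially the same route as the paper's proof: the same four-term decomposition of $b_{\mathrm{fec}}(x,\xi\cup\eta)$, the same application of $K_0^{-1}$ and the bound $e^{c_\phi C}$ per coherent-state factor, and, crucially, the same key step of summing over $x\in\xi$, reordering to $\sum_{y\in\xi}\sum_{x\in\xi\setminus y}$, and using $a^+(x-y)e^{\phi(x-y)}\leq A_1\phi(x-y)$ together with $te^{-t}\leq e^{-1}$, $t^2e^{-t}\leq 4e^{-2}$ before invoking Theorem 3.2 of \cite{FKK2011a}. Your bookkeeping of the constants (taking $D$ to be $Ce^{c_\phi C}$ times the left-hand side of \eqref{big-mort-fec}, hence $a=D/m<C/2$) is consistent with, and slightly cruder than, the paper's, which is harmless since $C>1$.
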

\begin{remark}
In the density independent case, $A_2=B=0$,
and \eqref{big-mort-fec} may be rewritten
in the form:
\[
\varkappa ^{+}\Bigl(1+\frac{A_1}{eC}\Bigr) <
\frac{m}{2}e^{-c_{\phi }C}.
\]
\end{remark}
\begin{proof}
Set
\[
b_{\mathrm{fec}}(x,\ga)=\sum_{y\in\ga}e^{-E^{\phi } (
y,\gamma\setminus y )
}b_0( x,y,\gamma \setminus y).
\]
Then, one has
\begin{eqnarray*}
b_{\mathrm{fec}}\left( x,\eta \cup \xi \right)&=&\sum_{y\in
\eta
}e^{-E^{\phi }(y,\xi )}e^{-E^{\phi }(y,\eta \setminus
y)}a^{+}(x-y)\biggl( \varkappa ^{+}+\sum_{y'\in \xi
}b^{+}(y-y')\biggr)  \nonumber\\
&&+\sum_{y\in \eta }e^{-E^{\phi }(y,\xi )}e^{-E^{\phi }(y,\eta
\setminus
y)}a^{+}(x-y)\sum_{y'\in \eta \setminus y}b^{+}(y-y')
\nonumber\\
&&+\sum_{y\in \xi }e^{-E^{\phi }(y,\eta )}e^{-E^{\phi }(y,\xi
\setminus y)}a^{+}(x-y)\biggl( \varkappa ^{+}+\sum_{y'\in \xi
\setminus
y}b^{+}(y-y')\biggr)  \nonumber\\
&&+\sum_{y'\in \eta }\sum_{y\in \xi }e^{-E^{\phi }(y,\eta
)}e^{-E^{\phi }(y,\xi \setminus
y)}a^{+}(x-y)b^{+}(y-y'),\nonumber
\end{eqnarray*}
and, using \eqref{k-1trans}--\eqref{Kexp}, we obtain
\begin{eqnarray}\label{term-fec}
&&\left( K_{0}^{-1}b_{\mathrm{fec}}\left( x,\xi \cup \cdot
\right)
\right) \left( \eta \right)  \\&=& \,\sum_{y\in \eta
}e^{-E^{\phi
}(y,\xi )}e_{\lambda }\left( e^{-\phi \left( y-\cdot \right)
}-1,\eta \setminus y\right) a^{+}(x-y)\biggl( \varkappa
^{+}+\sum_{y'\in \xi }b^{+}(y-y')\biggr)  \nonumber\\
&&+\sum_{y\in \eta }e^{-E^{\phi }(y,\xi )}a^{+}(x-y)\sum_{y'\in
\eta
\setminus y}b^{+}(y-y')e^{-\phi \left( y-y'\right)
}e_{\lambda
}\left( e^{-\phi \left( y-\cdot \right) }-1,\eta \setminus
y\setminus y'\right)  \nonumber\\
&&+\sum_{y\in \xi }e_{\lambda }\left( e^{-\phi \left( y-\cdot
\right)
}-1,\eta \right) e^{-E^{\phi }(y,\xi \setminus
y)}a^{+}(x-y)\biggl(
\varkappa
^{+}+\sum_{y'\in \xi \setminus y}b^{+}(y-y')\biggr)
\nonumber\\
&&+\sum_{y'\in \eta }e_{\lambda }\left( e^{-\phi \left(
y-\cdot
\right) }-1,\eta \setminus y'\right) e^{-\phi \left(
y-y'\right)
}\sum_{y\in \xi }e^{-E^{\phi }(y,\xi \setminus
y)}a^{+}(x-y)b^{+}(y-y').\nonumber
\end{eqnarray}
Therefore, for $\kappa =e^{c_{\phi }C}$ we
have, by \eqref{LP-exp-mean},
 \begin{eqnarray*}
&&\int_{\Gamma _{0}}\left\vert \left(
K_{0}^{-1}b_{\mathrm{fec}}\left( x,\xi \cup \cdot \right)
\right)
\left( \eta \right) \right\vert C^{\left\vert
\eta \right\vert }d\lambda \left( \eta \right)  \\
&\leq &\,\kappa C\int_{\mathbb{R}^{d}}e^{-E^{\phi }(y,\xi
)}a^{+}(x-y)\biggl(
\varkappa ^{+}+\sum_{y'\in \xi }b^{+}(y-y')\biggr) dy \\
&&+\kappa C^{2}\int_{\mathbb{R}^{d}}\int_{\mathbb{R}
^{d}} e^{-E^{\phi }(y,\xi )}a^{+}(x-y)b^{+}(y-y')e^{-\phi
\left(
y-y'\right)
}dydy' \\
&&+\kappa \sum_{y\in \xi }e^{-E^{\phi }(y,\xi \setminus
y)}a^{+}(x-y)\biggl( \varkappa ^{+}+\sum_{y'\in \xi \setminus
y}b^{+}(y-y')\biggr)  \\
&&+\kappa C\int_{\mathbb{R}^{d}}e^{-\phi \left( y-y'\right)
}\sum_{y\in \xi }e^{-E^{\phi }(y,\xi \setminus
y)}a^{+}(x-y)b^{+}(y-y')dy'\\
&\leq&\,\kappa C\varkappa ^{+}+\kappa
C\int_{\mathbb{R}^{d}}a^{+}(x-y)e^{-E^{\phi }(y,\xi
)}\sum_{y'\in
\xi }
b^{+}(y-y') dy \\
&&+\kappa C^{2}B+(\kappa\varkappa ^{+}+\kappa
CB) \sum_{y\in \xi }e^{-E^{\phi }(y,\xi \setminus
y)}a^{+}(x-y) \\
&&+\kappa \sum_{y\in \xi }e^{-E^{\phi }(y,\xi \setminus
y)}a^{+}(x-y)
\sum_{y'\in \xi \setminus y}b^{+}(y-y').
\end{eqnarray*}
By \eqref{subdominate-fec-b+},
\[
\kappa C\int_{\mathbb{R}^{d}}a^{+}(x-y)e^{-E^{\phi }(y,\xi
)}\sum_{y'\in \xi } b^{+}(y-y') dy \leq \frac{\kappa C
A_2}{e}.
\]
Now we will verify the sufficient condition \eqref{sufcond}. By \eqref{subdominate-fec},
one
has
\begin{eqnarray*}
&&\sum_{x\in\xi}\sum_{y\in \xi\setminus x }e^{-E^{\phi }(y,(\xi
\setminus x)\setminus
y)}a^{+}(x-y)=\sum_{y\in \xi }\sum_{x\in\xi\setminus
y}e^{-E^{\phi }(y,(\xi \setminus y)\setminus
x)}a^{+}(x-y)\\ &=&\sum_{y\in \xi }e^{-E^{\phi }(y,\xi
\setminus y)}\sum_{x\in\xi\setminus y}e^{\phi(x-y)}a^{+}(x-y)
\leq \frac{A_1}{e}|\xi|,
\end{eqnarray*}
and, by \eqref{subdominate-fec} and
\eqref{subdominate-fec-b+}
we have
\begin{eqnarray*}
&&\sum_{x\in\xi} \sum_{y\in \xi \setminus x}e^{-E^{\phi
}(y,(\xi\setminus x) \setminus y)}a^{+}(x-y) \sum_{y'\in (\xi
\setminus x) \setminus y}b^{+}(y-y')\\ &=&\sum_{y\in\xi}
e^{-E^{\phi
}(y,\xi\setminus y)}\sum_{x\in \xi \setminus
y}e^{\phi(x-y)}a^{+}(x-y) \sum_{y'\in (\xi \setminus y)
\setminus
x}b^{+}(y-y')\\&\leq &\,A_1 A_{2}\sum_{y\in\xi} e^{-E^{\phi
}(y,\xi\setminus y)}\sum_{x\in \xi \setminus y}\phi(x-y)
\sum_{y'\in
\xi \setminus y}\phi(y-y')\leq \frac{4A_1A_2}{e^2}|\xi|.
\end{eqnarray*}
Hence, by \eqref{big-mort-fec}, we obtain
\eqref{sufcond}.
\end{proof}

Let $\bigl({\hat{L}}',\Dom({\hat{L}}')\bigr)$ be an operator
in $(\mathcal{L}_{C})'$ which is dual to the closed operator
$\bigl(
{\hat{L}},\D\bigr)$. Here and below $\hat{L}$
means either $\hLe$ or $\hLf$. We consider also its image on
${\mathcal{K}}_{C}$ under the isometry $R_{C}$, namely, let ${\hat{L}}^{\ast
}=R_{C}{\hat{L}}'R_{C^{-1}}$ with the domain $\Dom({\hat{L}}^{\ast
})=R_{C}\Dom({\hat{L}}')$.

By Proposition 3.5 of \cite{FKK2011a}, for
any $\alpha\in(0;1)$
\[
\mathcal{K}_{\alpha C}\subset \Dom({\hat{L}}^{\ast }).
\]

Under the conditions of Theorem~\ref{th-est} or Theorem~~\ref{th-fec}, there exists
$a\in\bigl(0;\frac{C}{2}\bigr)$ such that \eqref{sufcond} holds. In the following let
$\hat{T}(t)$ denotes either $\hat{U}_\mathrm{est}(t)$ or $\hat{U}_\mathrm{fec}(t)$. One
can consider the adjoint semigroup $\hat{T}'(t)$ in $(\mathcal{L}_{C})'$ and its image
$\hat{T}^{\ast }(t)$ in ${\mathcal{K}}_{C}$. By, e.g., Subsection~II.2.6 of \cite{EN2000},
the restriction $\hat{T}^{\odot }(t)$ of the semigroup $\hat{T}^{\ast }(t)$ onto its
invariant Banach subspace $\overline{\Dom({\hat{L}}^{\ast })}$ (here and below all
closures are in the norm of the space ${\mathcal{K}}_{C}$) is a strongly continuous
semigroup. Moreover,  its generator ${\hat{L}} ^\odot$ will be part of ${\hat{L}}^\ast$,
namely,
\[
\Dom({\hat{L}} ^\odot)=\Bigl\{k\in \Dom({\hat{L}}^\ast)
\Bigm|
{\hat{L}}^\ast k\in \overline{\Dom({\hat{L}}^\ast)}\Bigr\}
\]
and ${\hat{L} }^\ast k ={\hat{L}}^\odot k$ for any $k\in
\Dom({\hat{L}}^\odot)$.

\begin{theorem}[{Theorem 3.8 of \cite{FKK2011a}}]
For any $\alpha
\in \left( \frac{2a}{C};1\right) $ the set $\overline{
\mathcal{K}_{\alpha C}}$ is a $\hat{T}^{\odot }(t)$-invariant
Banach
subspace of $\mathcal{K}_{C}$.
\end{theorem}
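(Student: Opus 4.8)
The plan is to realise the dynamics on the finer scale $\L_{\alpha C}$ and then dualise. The key observation is that condition~\eqref{sufcond} is monotone in the weight: since $\alpha<1$ we have $(\alpha C)^{|\eta|}\le C^{|\eta|}$ for every $\eta\in\Ga_0$, so the left-hand side of \eqref{sufcond} formed with the weight $(\alpha C)^{|\eta|}$ is dominated by the same expression with $C^{|\eta|}$, hence is still bounded by $am|\xi|$ with the \emph{same} constant $a$ (the one fixed in Theorem~\ref{th-est}, resp.\ Theorem~\ref{th-fec}). Since the requirement $\alpha>\frac{2a}{C}$ is exactly $0<a<\frac{\alpha C}{2}$, the hypothesis of Theorem~3.2 of \cite{FKK2011a} holds verbatim with $C$ replaced by $\alpha C$. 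Consequently $(\hat L,\D_{\alpha C})$, where $\D_{\alpha C}:=\{G\in\L_{\alpha C}\mid |\cdot|\,G\in\L_{\alpha C}\}$, generates a holomorphic semigroup $\hat T_{\alpha C}(t)$ on $\L_{\alpha C}$.

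Next I would verify that $\hat T_{\alpha C}(t)$ extends the semigroup on $\LC$, i.e.\ $\hat T(t)G=\hat T_{\alpha C}(t)G$ for every $G\in\LC$ (recall $\LC\subset\L_{\alpha C}$ densely, with $\|\cdot\|_{\alpha C}\le\|\cdot\|_{C}$). Split $\hat L=\hat L_0+\hat L_1$ with $(\hat L_0 G)(\eta)=-m|\eta|G(\eta)$, so that $(\mu-\hat L_0)^{-1}$ is multiplication by $(\mu+m|\eta|)^{-1}$; using \eqref{sufcond} one checks $\|\hat L_1(\mu-\hat L_0)^{-1}\|\le\frac{a}{\alpha C}<\frac12$ on $\L_{\alpha C}$ and $\le\frac{a}{C}<\frac12$ on $\LC$ whenever $\Re\mu>0$. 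Hence on each scale $(\mu-\hat L)^{-1}$ equals the Neumann series $(\mu-\hat L_0)^{-1}\sum_{n\ge0}\bigl(\hat L_1(\mu-\hat L_0)^{-1}\bigr)^{n}$, whose partial sums are explicit functions on $\Ga_0$ independent of the ambient space; as $\|\cdot\|_{\alpha C}\le\|\cdot\|_{C}$, convergence in $\LC$ forces convergence in $\L_{\alpha C}$ to the same limit, so the two resolvents agree on $\LC$, and therefore (via the exponential formula) so do the two semigroups. In particular $\|\hat T(t)G\|_{\alpha C}\le\|\hat T_{\alpha C}(t)\|\,\|G\|_{\alpha C}$ for all $G\in\LC$.

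It then remains to dualise. For $k\in\K_{\alpha C}=(\L_{\alpha C})'$, the linear functional $G\mapsto\llangle\hat T(t)G,k\rrangle$, a priori defined only on $\LC$, satisfies $|\llangle\hat T(t)G,k\rrangle|\le\|\hat T_{\alpha C}(t)\|\,\|G\|_{\alpha C}\,\|k\|_{\K_{\alpha C}}$; since $\LC$ is $\|\cdot\|_{\alpha C}$-dense in $\L_{\alpha C}$, it extends uniquely to an element of $\K_{\alpha C}$, which — after unwinding the isometries $R_{C},R_{\alpha C}$ and the definition of the adjoint — is precisely $\hat T^{\ast}(t)k$. Thus $\hat T^{\ast}(t)\K_{\alpha C}\subset\K_{\alpha C}$; as $\hat T^{\ast}(t)$ is bounded on $\KC$ and $\K_{\alpha C}$ is $\|\cdot\|_{\KC}$-dense in $\overline{\K_{\alpha C}}$, this upgrades to $\hat T^{\ast}(t)\overline{\K_{\alpha C}}\subset\overline{\K_{\alpha C}}$. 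Finally, $\K_{\alpha C}\subset\Dom(\hat L^{\ast})$ for $\alpha<1$ by Proposition~3.5 of \cite{FKK2011a}, so $\overline{\K_{\alpha C}}\subset\overline{\Dom(\hat L^{\ast})}$ and $\hat T^{\odot}(t)$ coincides with $\hat T^{\ast}(t)$ there; being a closed subspace of $\KC$, $\overline{\K_{\alpha C}}$ is a Banach space, and the claim follows. I expect the one genuinely delicate point to be the consistency step $\hat T(t)=\hat T_{\alpha C}(t)|_{\LC}$: it is exactly what makes the estimate $\|\hat T(t)G\|_{\alpha C}\le\const\,\|G\|_{\alpha C}$ available, and it works only because $\hat L_0$, $\hat L_1$ and their resolvents act on honest functions on $\Ga_0$, so that the two Banach-space realisations can be identified on the common domain.
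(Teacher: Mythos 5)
Note first that the paper does not actually prove this statement: it is quoted as Theorem 3.8 of \cite{FKK2011a}, so there is no internal proof to compare with. Judged on its own, your argument is correct, and it reconstructs precisely the mechanism for which the threshold $\alpha>\frac{2a}{C}$ is designed. The monotonicity of \eqref{sufcond} in the weight shows the same constant $a$ works on $\L_{\alpha C}$, and $\alpha>\frac{2a}{C}$ is exactly $a<\frac{\alpha C}{2}$, so the quoted generation result applies on the larger space; your consistency step (the resolvent on either scale is the same Neumann series $(\mu-\hat L_0)^{-1}\sum_{n\ge0}(\hat L_1(\mu-\hat L_0)^{-1})^n$, with relative bounds $\frac{a}{C}<\frac12$ on $\LC$ and $\frac{a}{\alpha C}<\frac12$ on $\L_{\alpha C}$, hence the semigroups agree on $\LC$ via the exponential formula) is the right way to make the restriction/extension rigorous; and the duality step correctly identifies the $\|\cdot\|_{\alpha C}$-continuous extension of $G\mapsto\llangle \hat T(t)G,k\rrangle$ with $\hat T^{\ast}(t)k$, since both lie in $\KC$ (recall $\K_{\alpha C}\subset\KC$) and agree against the dense set $\LC\subset\L_{\alpha C}$; invariance of the closure and the passage to $\hat T^{\odot}(t)$ via $\K_{\alpha C}\subset\Dom(\hat L^{\ast})$ then follow as you say. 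This two-scale argument (run the same perturbation estimate on $\L_{\alpha C}$, then dualize) is essentially what underlies the cited Theorem 3.8, so you have not found a genuinely different route, but a self-contained reconstruction of it. One cosmetic caveat: the paper fixes $C>1$ in \eqref{space1}, while $\alpha C$ may be $\le 1$ for $\alpha$ close to $\frac{2a}{C}$; nothing in the perturbation argument uses $C>1$, but you should say this explicitly rather than invoke the quoted theorem ``verbatim'' with $C$ replaced by $\alpha C$.
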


Therefore, for $\alpha \in \left( \frac{2a}{C};1\right)$, one
can consider the restriction $\hat{T}^{\odot \alpha}$
of the semigroup $\hat{T}^{\odot}$ onto $\overline{\K_{\alpha
C}}$.
This restriction will be strongly continuous semigroup with
generator $\hat{L}^{\odot \alpha}$ which is restriction of
${\hat{L}}^\odot$ onto $\overline{\K_{\alpha C}}$ (see, e.g.,
Subsection II.2.3 of \cite{EN2000}). Therefore,
\[
\Dom({\hat{L}} ^{\odot\alpha})=\Bigl\{k\in
\overline{\K_{\alpha C}} \Bigm|
{\hat{L}}^\ast k\in \overline{\K_{\alpha C}}\Bigr\},
\]
and ${\hat{L}} ^{\odot\alpha}$ coincides
with $\hat{L}^\ast$ on $\Dom({\hat{L}} ^{\odot\alpha})$.
Note that for any $k\in\K_{\alpha C}\subset
D(\hat{L}^\ast)$
\[
\bigl(\hat{L}^\ast k\bigr)(\eta)=-m|\eta|k(\eta)|+\sum_{x\in
\eta }\int_{\Ga _{0}}k(\xi \cup (\eta \setminus
x))\bigl( K_0^{-1}b(x,\cdot\cup \eta\setminus x)\bigr)
(\xi)d\la
(\xi).
\]
The explicit expressions can be found using \eqref{term-est} or \eqref{term-fec}.

Hence, we have the strong solution (in the sense of the norm in $\K_C$) of the evolution
equation \begin{equation}\label{KFP-evoleqn} \frac{\partial}{\partial t} k_t =
\hat{L}^\ast k_t \end{equation} at least on the subspace $\K_{\alpha C}$.

\begin{remark}
To study stationary equation $\hat{L}^\ast k=0$ corresponding to \eqref{KFP-evoleqn} on
the set of functions $k\in\K_{\alpha C}$ such that $k(\emptyset)=1$, one may consider
even
weaker assumptions without denominator $2$ in \eqref{bigmort} or \eqref{big-mort-fec}.
However, by Proposition 3.9 of \cite{FKK2011a}, a~unique solution of this equation will
satisfy $k(\eta)=0$ for all $|\eta|\neq0$.
\end{remark}

\section{Vlasov scaling}
To begin with, we would like to explain the idea of
the Vlasov-type scaling. The general scheme describing this scaling for the
birth-and-death dynamics as well as for the conservative ones may be found in
\cite{FKK2010a}. This approach was successfully realized for the
Bolker--Dieckmann--Law--Pacala model \eqref{BP-gen}--\eqref{BP-birthrate}
in \cite{FKK2010c}.

Let us now detail how we proceed to organize the Vlasov-type scaling. We will initially
scale the generator $L$ by the scaling parameter $\eps>0$, in such a way that the
following holds. First of all the $K$-image $\hat{L}_\eps$
of the rescaled operator $L_\eps$ has to be a generator of a semigroup on
some $\L_{C_\eps}$. Consider the corresponding dual semigroup
$\hat{T}_\eps^\ast(t)$. Let us choose an initial function of the
corresponding Cauchy problem depending on $\eps$ in such a way that $k_0^\e(\eta) \sim
\eps^{-|\eta|} r_0(\eta)$,
$\eps\rightarrow 0$,
$\eta\in\Ga_0$ with some function $r_0$, independent of
$\eps$. Secondly, the scaling $L\mapsto L_\eps$ has to be performed to assure that the
semigroup $\hat{T}_\eps^\ast(t)$ preserves the order of the singularity:
\[
(\hat{T}_\eps^\ast(t)k_0^\e)(\eta) \sim \eps^{-|\eta|}
r_t(\eta),
\quad \eps\rightarrow 0, \ \ \eta\in\Ga_0.
\]
Moreover, the dynamics $r_0 \mapsto r_t$
should preserve coherent states. Namely, if
$r_0(\eta)=e_\la(\rho_0,\eta)$, then
$r_t(\eta)=e_\la(\rho_t,\eta)$
and there exists explicit (nonlinear, in general)
differential equation for $\rho_t$:
\[
\frac{\partial}{\partial t}\rho_t(x) = \upsilon(\rho_t)(x)
\]
which is called the Vlasov-type equation.

Below we realize this approach for the case of
\[
(LF)(\ga)=m
\sum_{x\in\ga}D_{x}^{-}F(\ga)+\int_{\R^{d}}b^{}(x,\ga)D_{x}^{+}F(\ga)dx,
\]
where $b=b(a^+,b^+,\phi)$ is either birth rate with establishment (see \eqref{est-gen})
or the one corresponding to the fecundity mechanism.
Let us consider for any $\eps\in(0;1]$ the following scaling
\[
(L_\eps F)(\ga)=m
\sum_{x\in\ga}D_{x}^{-}F(\ga)+\eps^{-1}\int_{\R^{d}}b^{}_\eps(x,\ga)D_{x}^{+}F(\ga)dx,
\]
with $b_\eps=b(\eps a^+,\eps b^+,\eps \phi)$. Here $D_x^\pm$
are given by \eqref{elementarydif}. We denote by
$b_{\eps,\mathrm{est}}$
and $b_{\eps,\mathrm{fec}}$ the scaled rates for the corresponding models.
We
define also the renormalized operator (see \cite{FKK2010a,FKK2010c} for details)
\[
\hat{L}_{\eps,\mathrm{ren}}:=R_{\eps^{-1}}K^{-1}L_\eps
KR_\eps,
\]
where $(R_\sigma G)(\eta)
 = \sigma^{|\eta|}G(\eta)$ for arbitrary $\sigma>0$.
\begin{lemma}\label{lemma-eps}
Suppose that the conditions of Theorem \ref{th-est}
(or Theorem \ref{th-fec}) are satisfied with $\langle\phi\rangle$
instead of $c_\phi$ in \eqref{bigmort}
(in \eqref{big-mort-fec}, correspondingly).  Then there exists
$a\in\Bigl(0;\frac{C}{2}\Bigr)$
such that
\begin{equation}\label{sufcond-eps}
\sum_{x\in\xi} \int_{\Gamma _{0}}\left\vert \left(
K_{0}^{-1}b_\eps\left( x,(\xi\setminus
x) \cup \cdot \right) \right)
\left( \eta \right) \right\vert \eps^{-|\eta|}
C^{\left\vert
\eta \right\vert }d\lambda \left( \eta \right)
\leq am |\xi|,
\end{equation}
where $b_\eps=b_{\eps,\mathrm{est}}$ (or
$b_\eps=b_{\eps,\mathrm{fec}}$,
correspondingly).
\end{lemma}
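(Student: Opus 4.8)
The plan is to rerun, almost verbatim, the proofs of Theorems~\ref{th-est} and~\ref{th-fec} with the scaled kernel $b_\eps$ in place of $b$ and with the extra weight $\eps^{-|\eta|}$ inside the integral, and to check that the only net effect is that the constant $\kappa=e^{c_{\phi}C}$ occurring there is replaced by a quantity bounded by $e^{C\langle\phi\rangle}$ uniformly in $\eps\in(0;1]$. This is exactly why the hypothesis asks for \eqref{bigmort}, resp.\ \eqref{big-mort-fec}, with $\langle\phi\rangle$ instead of $c_{\phi}$ (and note $c_{\phi}\le\langle\phi\rangle$ always, so the hypothesis is only stronger). As a preliminary remark, by the same manipulation that gave \eqref{L_desc_expr}, using $R_\sigma$, $K^{-1}$, $K$ and the definition $\hat L_{\eps,\mathrm{ren}}=R_{\eps^{-1}}K^{-1}L_\eps KR_\eps$, one gets
\[
(\hat L_{\eps,\mathrm{ren}}G)(\eta)=-m|\eta|G(\eta)+\sum_{\xi\subset\eta}\int_{\X}G(\xi\cup x)\,\eps^{-|\eta\setminus\xi|}\bigl(K_0^{-1}b_\eps(x,\cdot\cup\xi)\bigr)(\eta\setminus\xi)\,dx,
\]
so that \eqref{sufcond-eps} is precisely the analogue of \eqref{sufcond} for the renormalized generator; that is all that must be verified here.

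The analytic heart of the matter is the elementary inequality $1-e^{-t}\le t$, $t\ge0$. Applied pointwise to $t=\eps\phi(u)$ it gives $\eps^{-1}\bigl(1-e^{-\eps\phi(u)}\bigr)\le\phi(u)$, hence $\eps^{-1}\int_\X\bigl(1-e^{-\eps\phi(u)}\bigr)du\le\langle\phi\rangle$ for every $\eps\in(0;1]$. Consequently, whenever a coherent state $e_\la\bigl(\bigl|e^{-\eps\phi(x-\cdot)}-1\bigr|,\eta\bigr)$ is integrated against $\eps^{-|\eta|}C^{|\eta|}d\la(\eta)$, formula \eqref{LP-exp-mean} applied to $\eps^{-1}C\bigl|e^{-\eps\phi(x-\cdot)}-1\bigr|$ yields $\exp\bigl\{\eps^{-1}C\int_\X(1-e^{-\eps\phi(u)})\,du\bigr\}\le e^{C\langle\phi\rangle}$; this is the quantity that replaces $\kappa$. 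The rest is $\eps$-bookkeeping. The scaled expansion of $\bigl(K_0^{-1}b_\eps(x,(\xi\setminus x)\cup\cdot)\bigr)(\eta)$ is obtained from \eqref{term-est} (resp.\ \eqref{term-fec}) by the substitution $a^+\mapsto\eps a^+$, $b^+\mapsto\eps b^+$, $\phi\mapsto\eps\phi$; the scaled subdominance bounds $\eps a^+(x)\le A_1\eps\phi(x)$, $\eps a^+(x)\le A_1\eps\phi(x)e^{-\eps\phi(x)}$ (the latter since $e^{-\phi}\le e^{-\eps\phi}$ for $\eps\le1$), $\eps a^+(x-y)\,\eps b^+(y-y')\le A_2\,\eps\phi(x-y)\,\eps\phi(x-y')$ and $\eps b^+(x)\le A_2\eps\phi(x)$ all follow from \eqref{subdominate}--\eqref{specsubdominate}, resp.\ \eqref{subdominate-fec}--\eqref{subdominate-fec-b+}, by multiplying through by the appropriate power of $\eps$.

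Then each term of the scaled expansion is estimated exactly as in the two proofs above, with two matched cancellations. First, every factor $\eps^{-1}C$ produced by extracting one $\eta$-variable from $\Ga_0$ (splitting $\eps^{-|\eta|}C^{|\eta|}=(\eps^{-1}C)\,\eps^{-(|\eta|-1)}C^{|\eta|-1}$ and using \eqref{minlosid}) is absorbed by the single factor $\eps a^+$ or $\eps b^+$ carried by that variable, which restores exactly the $C$ or $C^2$ of the unscaled estimate. Second, every surplus power of $\eps$ left over in a ``pure $\xi$'' sum is absorbed into the exponential damping: the inequalities $te^{-t}\le e^{-1}$, $t^2e^{-t}\le4e^{-2}$ of the original proofs are used in their rescaled forms $\eps E^{\phi}(\cdot,\cdot)\,e^{-\eps E^{\phi}(\cdot,\cdot)}\le e^{-1}$ and $\bigl(\eps E^{\phi}(\cdot,\cdot)\bigr)^2e^{-\eps E^{\phi}(\cdot,\cdot)}\le4e^{-2}$, and, in the fecundity computation, $e^{-\eps E^{\phi}(y,(\xi\setminus x)\setminus y)}=e^{-\eps E^{\phi}(y,\xi\setminus y)}e^{\eps\phi(x-y)}$ together with $e^{\eps\phi(x-y)}a^+(x-y)\le A_1\phi(x-y)$ gives $\eps\sum_{x}e^{\eps\phi(x-y)}a^+(x-y)\le A_1\eps E^{\phi}(y,\xi\setminus y)$, again damped by $e^{-\eps E^{\phi}(y,\xi\setminus y)}$. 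Carried through term by term, the left-hand side of \eqref{sufcond-eps} is bounded, uniformly in $x\in\X$, $\xi\in\Ga_0$ and $\eps\in(0;1]$, by $D|\xi|$ with the same $D$ as in the proof of Theorem~\ref{th-est} (resp.\ by the analogous bound coming from the proof of Theorem~\ref{th-fec}), save that $\kappa$ is now $e^{C\langle\phi\rangle}$. The hypothesis — \eqref{bigmort} (resp.\ \eqref{big-mort-fec}) with $\langle\phi\rangle$ replacing $c_{\phi}$ — is precisely $D/C<m/2$ for this $D$, so one may take $a:=D/m\in[0;C/2)$ (enlarging $a$ slightly if $D=0$), and the same $a$ works for all $\eps\in(0;1]$. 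The only genuine obstacle is the second step: one must check that after the summation over $x\in\xi$ every surplus power of $\eps$ has really been consumed, so that the final constant is $\eps$-independent; modulo this, the argument is a word-for-word repetition of the proofs of Theorems~\ref{th-est} and~\ref{th-fec}.
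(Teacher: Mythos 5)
Your proposal is correct and follows essentially the same route as the paper: expand $\eps^{-|\eta|}\bigl(K_0^{-1}b_{\eps,\sharp}(x,\xi\cup\cdot)\bigr)(\eta)$ as in \eqref{term-est}/\eqref{term-fec} with $a^+\mapsto\eps a^+$, $b^+\mapsto\eps b^+$, $\phi\mapsto\eps\phi$, absorb the singular weight into the coherent states via $\eps^{-1}\bigl|e^{-\eps\phi}-1\bigr|\le\phi$ (the paper's $\psi_\eps$), cancel each extracted variable's $\eps^{-1}$ against the $\eps$ on its kernel factor, and spend the leftover $\eps$'s through $\eps E^\phi e^{-\eps E^\phi}\le e^{-1}$ and $(\eps E^\phi)^2 e^{-\eps E^\phi}\le 4e^{-2}$, so that the only change in the final constant is $e^{c_\phi C}\mapsto e^{C\langle\phi\rangle}$. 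The paper states this bookkeeping more tersely (``the estimate will be almost the same, with $|e^{-\phi}-1|$ replaced by $\phi$''), whereas you make it explicit; the ``obstacle'' you flag at the end is in fact already resolved by the cancellations you describe.
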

\begin{proof}
We begin with the establishment case. Set
\[
\psi_\eps(x)=\eps^{-1}\bigl(e^{-\eps \phi (x)}-1\bigr), \quad
x\in\X.
\]
By \eqref{term-est}, we have
\begin{eqnarray}
&&\eps^{-|\eta|}\left( K_{0}^{-1}b_{\eps,\mathrm{est}}\left(
x,\xi \cup \cdot \right)
\right) \left( \eta \right) \label{term-est-eps}\\
&=&\,\eps e_{\lambda }\left( \psi_\eps(x-\cdot),\eta \right)
e^{-\eps E^{\phi }( x,\xi ) }\sum_{y\in\xi} a^+(x-y)\biggl(
\varkappa^++\sum_{y'\in\xi\setminus y} \eps b^+(y-y')\biggr)
\nonumber \\
&&+\eps e^{-\eps E^{\phi }(x,\xi )}\sum_{y'\in \eta }\sum_{y\in
\xi
} a^{+}(x-y) b^{+}(y-y')e^{-\eps \phi (x-y') }e_{\lambda
}\left( \psi_\eps(x-\cdot),\eta \setminus
y'\right) \nonumber \\
&&+e^{-\eps E^{\phi }(x,\xi )}\sum_{y'\in \eta }e_{\lambda
}\left(
\psi_\eps(x-\cdot),\eta \setminus y'\right) a^{+}(x-y^{\prime
})e^{-\eps \phi (x-y')}\nonumber \\&&\qquad\qquad\times \biggl(
\varkappa
^{+}+\eps \sum_{y\in \xi }b^{+}(y-y')\biggr) \nonumber \\
&&+e^{-\eps E^{\phi }(x,\xi )}\sum_{y \in \eta
}\sum_{y' \in \eta\setminus y
}a^{+}(x-y) b^{+}(y-y')e^{-\eps \phi (x-y) }e^{-\eps \phi
(x-y')
}\nonumber \\&&\qquad\qquad\times e_{\lambda }\left(
\psi_\eps(x-\cdot),\eta \setminus \left\{ y,y'\right\}
\right).\nonumber
\end{eqnarray}
Since $\eps\in(0;1]$ and
\[
|\psi_\eps(x)|\leq \phi(x), \qquad
x\in\X,
\]
the estimate for $\eps^{-|\eta|}\left|
K_{0}^{-1}b_{\eps,\mathrm{est}}\left( x,\xi \cup \cdot
\right)
\right| \left( \eta \right)$
will be almost the same as for
$\left| K_{0}^{-1}b\left( x,\xi \cup \cdot \right)
\right|
\left( \eta \right)$
in the proof of Theorem~\ref{th-est}.
The changes will concern the term $|e^{-\phi}-1|$ which will be substitute by $\phi$.
This leads to the new constant $\langle\phi\rangle$ instead of $c_\phi$ in
further estimates. The rest part of the proof is the same as for the non-scaled case.

The same approach may be used for the case of fecundity. Indeed,
\begin{eqnarray}\label{term-fec-eps}
&&
\eps^{-|\eta|}\left( K_{0}^{-1}b_{\eps,\mathrm{fec}}\left(
x,\xi \cup \cdot \right)
\right) \left( \eta \right)  \\
&=& \,\sum_{y\in \eta }e^{-\eps
E^{\phi
}(y,\xi )}e_{\lambda }\left( \psi_\eps(y-\cdot),\eta \setminus
y\right)  a^{+}(x-y)\biggl( \varkappa
^{+}+\sum_{y'\in \xi }\eps b^{+}(y-y')\biggr)  \nonumber\\
&&+\sum_{y\in \eta }e^{-\eps E^{\phi }(y,\xi )}
a^{+}(x-y)\sum_{y'\in \eta
\setminus y} b^{+}(y-y')e^{-\eps \phi \left( y-y'\right)
}e_{\lambda
}\left(  \psi_\eps(y-\cdot),\eta \setminus
y\setminus y'\right)  \nonumber\\
&&+\eps\sum_{y\in \xi }e_{\lambda }\left(
\psi_\eps(y-\cdot),\eta \right) e^{-\eps E^{\phi }(y,\xi
\setminus y)} a^{+}(x-y)\biggl(
\varkappa
^{+}+\sum_{y'\in \xi \setminus y}\eps b^{+}(y-y')\biggr)
\nonumber\\
&&+\eps\sum_{y'\in \eta }e_{\lambda }\left(
\psi_\eps(y-\cdot),\eta \setminus y'\right) e^{-\eps \phi
\left( y-y'\right)
}\sum_{y\in \xi }e^{-\eps E^{\phi }(y,\xi \setminus
y)} a^{+}(x-y) b^{+}(y-y').\nonumber
\end{eqnarray}
The analogous arguments to establishment case complete the proof.
\end{proof}

Under conditions of Lemma \ref{lemma-eps} we have the
following result about the renormalized semigroups in $\LC$
and $\K_C$.
\begin{proposition}[{Proposition 4.1 of \cite{FKK2011a}}]
Let the conditions of Lemma~\ref{lemma-eps}
hold. Then for any $\eps\in(0;1]$,
$\bigl(\hat{L}_{\eps,\mathrm{est},\mathrm{ren}},
\D\bigr)$ and
$\bigl(\hat{L}_{\eps,\mathrm{fec},\mathrm{ren}},
\D\bigr)$ are the generators of holomorphic semigroups
$\hat{U}_{\eps,\mathrm{est}}(t)$ and
$\hat{U}_{\eps,\mathrm{fec}}(t)$
on $\L_C$, correspondingly.
Moreover, there exists $\alpha_0\in(0;\frac{1}{\nu})$ such
that for any
$\alpha\in(\alpha_0;\frac{1}{\nu})$ and  $\eps\in(0;1]$
there
exist a strongly continuous semigroups $\hat{U}^{\odot
\alpha}_{\eps,\sharp}(t)$ on the space $\overline{\K_{\alpha
C}}$  with
generator $\hat{L}^{\odot
\alpha}_{\eps,\sharp}=\hat{L}_{\eps,\sharp,\mathrm{ren}}^\ast$
 on the domain
 \[
 \Dom\bigl( L^{\odot \alpha}_{\eps,\sharp}\bigr)
 = \bigl\{k\in \overline{\K_{\alpha
C}} \bigm|\hat{L}_{\eps,\sharp,\mathrm{ren}}^\ast k \in
\overline{\K_{\alpha C}}  \bigr\}.
\]
Here and below `\,$\sharp$\/' means `\,{\rm est}\/' or
`\,{\rm
fec}\/', correspondingly. Note that, for $k\in\K_{\alpha
C}$
\begin{eqnarray}\label{oper_adj_eps}
(\hat{L}_{\eps,\sharp, \mathrm{ren}}^\ast
k)(\eta)&=&-m|\eta|k(\eta) \\
&&+\sum_{x\in \eta }\int_{\Ga _{0}}k(\xi \cup (\eta
\setminus
x))\eps^{-|\xi|} \bigl( K_0^{-1}b_{\eps,\sharp} (x,\cdot\cup
\eta\setminus
x)\bigr) (\xi)d\la (\xi).\nonumber
\end{eqnarray}
\end{proposition}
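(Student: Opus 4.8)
The plan is to reduce the assertion to the general generation and sun-dual theorems of \cite{FKK2011a} quoted in Section~3, applied to the renormalized operator $\hat{L}_{\eps,\sharp,\mathrm{ren}}$ in place of $\hat{L}$; no new functional-analytic result is needed once the structure of $\hat{L}_{\eps,\sharp,\mathrm{ren}}$ and the bound \eqref{sufcond-eps} are in hand. First I would compute $\hat{L}_{\eps,\sharp,\mathrm{ren}}=R_{\eps^{-1}}K^{-1}L_\eps KR_\eps$ on $\D$ using \eqref{KinverseSpec} and $(R_\sigma G)(\eta)=\sigma^{|\eta|}G(\eta)$. Because $R_\sigma$ commutes with multiplication by any function of $|\eta|$, the death part $-m|\eta|G(\eta)$ is left unchanged by the renormalization, whereas the explicit prefactor $\eps^{-1}$ in front of the birth term of $L_\eps$ combines with the conjugation to give, for $G\in\D$,
\begin{equation*}
\bigl(\hat{L}_{\eps,\sharp,\mathrm{ren}}G\bigr)(\eta)=-m|\eta|G(\eta)+\sum_{\xi\subset\eta}\int_{\R^{d}}G(\xi\cup x)\,\eps^{-|\eta\setminus\xi|}\bigl(K_{0}^{-1}b_{\eps,\sharp}(x,\cdot\cup\xi)\bigr)(\eta\setminus\xi)\,dx.
\end{equation*}
Thus $\hat{L}_{\eps,\sharp,\mathrm{ren}}$ has precisely the form treated by Theorem~3.2 of \cite{FKK2011a}, with the quantity controlling its birth part being exactly the left-hand side of \eqref{sufcond-eps}.

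Next I would invoke Lemma~\ref{lemma-eps}: under the stated hypotheses it yields one and the same $a\in\bigl(0;\tfrac{C}{2}\bigr)$, independent of $\eps$, such that \eqref{sufcond-eps} holds for all $\eps\in(0;1]$. This is exactly condition \eqref{sufcond} for the operator $\hat{L}_{\eps,\sharp,\mathrm{ren}}$ on $\LC$; the only change relative to Section~3 is the appearance of $\langle\phi\rangle$ in place of $c_\phi$, which (via $|\psi_\eps|\le\phi$) affects only the implicit constant $\kappa=e^{\langle\phi\rangle C}$ and not the structure of the estimate. Hence Theorem~3.2 of \cite{FKK2011a} applies and gives, for each $\eps\in(0;1]$, that $\bigl(\hat{L}_{\eps,\sharp,\mathrm{ren}},\D\bigr)$ is the generator of a holomorphic semigroup on $\LC$, which we denote $\hat{U}_{\eps,\mathrm{est}}(t)$ or $\hat{U}_{\eps,\mathrm{fec}}(t)$ according to the choice of $\sharp$. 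This settles the first part.

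For the statement concerning $\hat{U}^{\odot\alpha}_{\eps,\sharp}$ I would repeat the argument used in Section~3 between Theorem~\ref{th-fec} and \eqref{KFP-evoleqn}, now with $\hat{L}_{\eps,\sharp,\mathrm{ren}}$ in the role of $\hat{L}$: Proposition~3.5 of \cite{FKK2011a} gives $\K_{\alpha C}\subset\Dom(\hat{L}_{\eps,\sharp,\mathrm{ren}}^{\ast})$ for all $\alpha\in(0;1)$; the adjoint-semigroup construction (Subsection~II.2.6 of \cite{EN2000}) produces a strongly continuous semigroup on the invariant subspace $\overline{\Dom(\hat{L}_{\eps,\sharp,\mathrm{ren}}^{\ast})}$ whose generator is the part of $\hat{L}_{\eps,\sharp,\mathrm{ren}}^{\ast}$ there; and Theorem~3.8 of \cite{FKK2011a} makes $\overline{\K_{\alpha C}}$ invariant under that semigroup for every $\alpha\in\bigl(\tfrac{2a}{C};1\bigr)$. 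Since $a$ comes from Lemma~\ref{lemma-eps} and is uniform in $\eps$, this admissible range, here equal to $\bigl(\tfrac{2a}{C};1\bigr)$, is the same for all $\eps\in(0;1]$; it is the range $(\alpha_{0};\tfrac{1}{\nu})$ of the statement, with $\alpha_{0}=\tfrac{2a}{C}$ and $\nu=1$. Restricting the semigroup to $\overline{\K_{\alpha C}}$ (Subsection~II.2.3 of \cite{EN2000}) then gives $\hat{U}^{\odot\alpha}_{\eps,\sharp}(t)$ with generator $\hat{L}^{\odot\alpha}_{\eps,\sharp}=\hat{L}_{\eps,\sharp,\mathrm{ren}}^{\ast}$ on the indicated domain. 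Finally, formula \eqref{oper_adj_eps} is obtained by dualising the displayed expression for $\hat{L}_{\eps,\sharp,\mathrm{ren}}$ with respect to $\llangle\cdot,\cdot\rrangle$ and applying the Minlos-type identity \eqref{minlosid} to convert $\sum_{\xi\subset\eta}$ into the $\Ga_{0}$-integral (turning $\eps^{-|\eta\setminus\xi|}$ into $\eps^{-|\xi|}$), just as in the passage from \eqref{L_desc_expr} to the formula for $\hat{L}^{\ast}$ in Section~3; the concrete kernels follow from \eqref{term-est-eps} and \eqref{term-fec-eps}.

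The main point is that there is no genuinely hard step: the generation theorem, the adjoint-semigroup construction, and the invariance of $\overline{\K_{\alpha C}}$ are all imported from \cite{FKK2011a} and \cite{EN2000}, while the single analytic input, the bound \eqref{sufcond-eps}, is already Lemma~\ref{lemma-eps}. The one place requiring care is the bookkeeping with the scaling parameter: one must check that the powers of $\eps$ generated by $R_{\eps^{-1}}$, by $R_\eps$, and by the explicit $\eps^{-1}$ in $L_\eps$ collapse to exactly the weight $\eps^{-|\eta|}$ that is paired with $C^{|\eta|}$ in \eqref{sufcond-eps}, and --- most importantly --- that the constant $a$, and hence $\alpha_{0}$, carries no hidden dependence on $\eps$. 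This last point is precisely what Lemma~\ref{lemma-eps} guarantees, so in the end the proof reduces to invoking it at the right moment with the uniform constant.
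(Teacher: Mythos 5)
Your proposal is correct and follows essentially the same route as the paper, which proves nothing new here but imports Proposition 4.1 (together with Theorem 3.2, Proposition 3.5, Theorem 3.8) of \cite{FKK2011a}, the sole model-specific input being the uniform-in-$\eps$ bound \eqref{sufcond-eps} from Lemma~\ref{lemma-eps}. Your bookkeeping of the powers of $\eps$ in $\hat{L}_{\eps,\sharp,\mathrm{ren}}$ and the dualisation via \eqref{minlosid} leading to \eqref{oper_adj_eps} are exactly the intended computations.
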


By \eqref{term-est-eps}, \eqref{term-fec-eps}, there exist
the
following point-wise
limits
\begin{eqnarray}
&&\lim_{\eps\rightarrow 0}\eps^{-|\eta|}\left(
K_{0}^{-1}b_{\eps,\mathrm{est}}\left( x,\xi \cup \cdot
\right)
\right) \left( \eta \right) \label{BV-est}\\
&=&\,\varkappa
^{+}\sum_{y'\in \eta }e_{\lambda }\left(-
\phi(x-\cdot),\eta \setminus y'\right) a^{+}(x-y^{\prime
})\nonumber\\
&&+\sum_{y \in \eta
}a^+(x-y)\sum_{y' \in \eta\setminus y
} b^{+}(y-y')
e_{\lambda }\left( -\phi(x-\cdot),\eta \setminus \left\{
y,y'\right\} \right)=:B_x^{V,\mathrm{est}}(\eta)\nonumber
\end{eqnarray}
and
\begin{eqnarray}
&&\lim_{\eps\rightarrow 0}
\eps^{-|\eta|}\left( K_{0}^{-1}b_{\eps,\mathrm{fec}}\left(
x,\xi \cup \cdot \right)
\right) \left( \eta \right)  \label{BV-fec}\\&=& \,\varkappa
^{+}\sum_{y\in \eta }e_{\lambda }\left( -\phi(y-\cdot),\eta
\setminus y\right)  a^{+}(x-y)  \nonumber\\
&&+\sum_{y\in \eta } a^{+}(x-y)\sum_{y'\in \eta
\setminus y} b^{+}(y-y')e_{\lambda
}\left(  -\phi(y-\cdot),\eta \setminus
y\setminus y'\right) =:B_x^{V,\mathrm{fec}}(\eta).\nonumber
\end{eqnarray}
It is worth pointing out that these limits do not depend on $\xi$.
Hence, we have
point-wise limits for $\hat{L}_{\eps, \sharp,
\mathrm{ren}}$:
\begin{equation}
(\hat{L}_{V,\sharp}G)(\eta ):=-m|\eta|G(\eta)+\sum_{\xi
\subset \eta
}\int_{\mathbb{R}^{d}}\,G(\xi \cup
x)B_{x}^{V,\sharp}(\eta\setminus \xi)
dx.\label{newexprV}
\end{equation}

The convergences \eqref{BV-est} and \eqref{BV-fec} in the space $\L_C$ are established
by
our next Lemma.
\begin{lemma}\label{convinLC} Let conditions of
Lemma~\ref{lemma-eps}
hold. Then, for a.a. $x\in\X$ and for $\la$-a.a.
$\xi\in\Ga_0$, the convergence \eqref{BV-est}
and \eqref{BV-fec} hold in the sense of norm
of $\L_C$.
\end{lemma}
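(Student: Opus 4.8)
\textbf{Proof proposal for Lemma \ref{convinLC}.}

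The plan is to fix $x\in\X$ and $\xi\in\Ga_0$ (outside appropriate null sets) and show that the difference $D_\eps(\eta):=\eps^{-|\eta|}\bigl(K_0^{-1}b_{\eps,\sharp}(x,\xi\cup\cdot)\bigr)(\eta)-B_x^{V,\sharp}(\eta)$ tends to $0$ in $\L_C=L^1(\Ga_0,C^{|\eta|}\la(d\eta))$. The natural tool is dominated convergence on $(\Ga_0,\la)$: the point-wise convergence $D_\eps(\eta)\to0$ for $\la$-a.a.\ $\eta$ is exactly \eqref{BV-est}, \eqref{BV-fec}, so it remains to produce an $\eps$-independent dominating function in $\L_C$. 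First I would compare term by term the explicit expression \eqref{term-est-eps} (resp.\ \eqref{term-fec-eps}) with the limit \eqref{BV-est} (resp.\ \eqref{BV-fec}). In each of the four summands the only $\eps$-dependent factors are (i) the coherent state $e_\la(\psi_\eps(x-\cdot),\eta)$ versus its limit $e_\la(-\phi(x-\cdot),\eta)$, since $\psi_\eps(u)=\eps^{-1}(e^{-\eps\phi(u)}-1)\to-\phi(u)$ pointwise; (ii) the scalar exponentials $e^{-\eps E^\phi(\cdot)}\to1$ and $e^{-\eps\phi(\cdot)}\to1$; and (iii) the innocuous prefactors $\eps$, or $\varkappa^++\eps\sum b^+$, etc. Using $|\psi_\eps(u)|\le\phi(u)$ (stated in the proof of Lemma~\ref{lemma-eps}) together with $|e^{-\eps\phi}|\le1$ and $e^{-\eps E^\phi}\le1$, every summand of $\eps^{-|\eta|}|K_0^{-1}b_{\eps,\sharp}(x,\xi\cup\cdot)|(\eta)$ is bounded, uniformly in $\eps\in(0;1]$, by the corresponding summand with $\psi_\eps$ replaced by $\phi$ and the scalar exponentials replaced by $1$; the same bound dominates $|B_x^{V,\sharp}(\eta)|$ as well since $|e_\la(-\phi(x-\cdot),\eta)|=e_\la(\phi(x-\cdot),\eta)$. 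Hence $|D_\eps(\eta)|$ is dominated by twice this $\eps$-free majorant $\Phi(\eta)$.

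Next I would check $\Phi\in\L_C$, i.e.\ $\int_{\Ga_0}\Phi(\eta)C^{|\eta|}d\la(\eta)<\infty$. This is precisely the computation already carried out in the proofs of Theorems~\ref{th-est} and~\ref{th-fec}, except that the factor $e_\la(|e^{-\phi(x-\cdot)}-1|,\eta)$ appearing there (which integrates to $\kappa=e^{c_\phi C}$ by \eqref{LP-exp-mean}) is now replaced by $e_\la(\phi(x-\cdot),\eta)$, which integrates to $e^{\langle\phi\rangle C}$ by the same identity \eqref{LP-exp-mean}. All the remaining one- and two-fold $dy,dy'$ integrals are handled verbatim as in those proofs using \eqref{subdominate}--\eqref{specsubdominate} (resp.\ \eqref{subdominate-fec}--\eqref{subdominate-fec-b+}) and the elementary bounds $te^{-t}\le e^{-1}$, $t^2e^{-t}\le4e^{-2}$; the output is a finite constant depending on $x$ and $\xi$ only through $\sum_{y\in\xi}a^+(x-y)$ and similar finite sums, hence finite for a.a.\ $x$ and $\la$-a.a.\ $\xi$. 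This is exactly where the hypothesis of Lemma~\ref{lemma-eps} — the conditions of Theorem~\ref{th-est} (or \ref{th-fec}) with $\langle\phi\rangle$ in place of $c_\phi$ — is used: it guarantees the majorant is integrable uniformly in $\eps$.

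With the $\eps$-uniform dominating function $\Phi\in\L_C$ in hand and the established pointwise convergence $D_\eps\to0$, the dominated convergence theorem on the measure space $(\Ga_0,C^{|\eta|}d\la)$ yields $\|D_\eps\|_C\to0$, which is the assertion. I expect the only real work to be the bookkeeping in the term-by-term comparison: one must verify that \emph{after} passing to absolute values the $\eps$-dependent pieces are genuinely dominated by their $\eps\to0$ counterparts and that the resulting majorant matches, term for term, the integrand already estimated in Section~3, so that no new estimate is needed — only the substitution $c_\phi\rightsquigarrow\langle\phi\rangle$. There is no analytic obstacle beyond this; the lemma is essentially a uniform-integrability upgrade of the pointwise limits \eqref{BV-est}, \eqref{BV-fec}.
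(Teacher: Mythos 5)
Your proposal is correct and follows essentially the paper's own argument: both establish the pointwise limits \eqref{BV-est}, \eqref{BV-fec} and then apply dominated convergence in $\L_C$ with an $\eps$-uniform majorant obtained from $|\psi_\eps|\leq\phi$, $e^{-\eps\phi}\leq1$, $e^{-\eps E^{\phi}}\leq1$, integrated via \eqref{minlosid} and \eqref{LP-exp-mean} (giving $e^{C\langle\phi\rangle}$ in place of $e^{c_\phi C}$). The only cosmetic difference is that the paper first splits the expression as $A_\eps+\eps B_\eps$ and treats the $\eps B_\eps$ part and the difference $|A_\eps-B_x^{V,\sharp}|$ by two separate applications of dominated convergence, whereas you dominate the full difference at once.
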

\begin{proof}
By \eqref{term-est-eps} and \eqref{term-fec-eps}, it is easy to see that
$\eps^{-|\eta|}\left( K_{0}^{-1}b_{\eps,\sharp}\left( x,\xi
\cup \cdot \right)
\right) \left( \eta \right)$ has the form  $A_\eps(\eta)+\eps
B_\eps(\eta)$. Moreover, the~proof of Lemma~\ref{lemma-eps} assures that $B_\eps$
has an integrable majorant. Hence, by the dominated convergence
theorem, $\eps B_\eps\rightarrow
0$ in $\LC$. Next, using again the dominated convergence theorem
and taking into account
\eqref{BV-est} and \eqref{BV-fec}, we will be able to show convergence of
$A_\eps$ to $B_x^{V,\sharp}$ in $\LC$ once we find uniform in $\eps$
integrable estimate for the corresponding differences
$|A_\eps-B_x^{V,\sharp}|$.

Since $e^{-\eps \phi}\leq1$ and $\psi_\eps(x)<\phi(x)$, for the establishment case,
we have
\begin{eqnarray*}
&&\,\varkappa
^{+}\sum_{y'\in \eta }a^{+}(x-y^{\prime
})\\&&\qquad\times\biggl|e^{-\eps E^{\phi }(x,\xi )}e^{-\eps
\phi (x-y')}e_{\lambda }\left(
\psi_\eps(x-\cdot),\eta \setminus y'\right) -e_{\lambda
}\left(-
\phi(x-\cdot),\eta \setminus y'\right)\biggr|\\
&&+\sum_{y \in \eta
}\sum_{y' \in \eta\setminus y
}a^{+}(x-y) b^{+}(y-y')\\&&\qquad\times\biggl| e^{-\eps E^{\phi
}(x,\xi )}e^{-\eps \phi (x-y) }e^{-\eps \phi (x-y')}
e_{\lambda }\left( \psi_\eps(x-\cdot),\eta \setminus \left\{
y,y'\right\} \right)\\&&\qquad\qquad -
e_{\lambda }\left( -\phi(x-\cdot),\eta \setminus \left\{
y,y'\right\}\right) \biggr|\\ &\leq&\,2\varkappa
^{+}\sum_{y'\in \eta }a^{+}(x-y^{\prime
})e_{\lambda }\left(
\phi(x-\cdot),\eta \setminus y'\right)\\&&+2\sum_{y \in \eta
}\sum_{y' \in \eta\setminus y
}a^{+}(x-y) b^{+}(y-y')e_{\lambda }\left( \phi(x-\cdot),\eta
\setminus \left\{ y,y'\right\}\right).
\end{eqnarray*}
The last expression is an element of $\L_C$, in view of \eqref{minlosid} and
\eqref{LP-exp-mean}. Indeed,
\[
\int_{\Ga_0} \sum_{y'\in \eta }a^{+}(x-y^{\prime
})e_{\lambda }\left(
\phi(x-\cdot),\eta \setminus
y'\right)C^{|\eta|}d\la(\eta)=e^{C\langle\phi\rangle},
\]
and, a similar equality holds for the second term.

One can get the same result for the fecundity case in a similar way.
\end{proof}

Let us denote by $\bar{B}_c^\infty$ the closed ball of radius $c>0$ in
the Banach space $L^\infty(\X)$.

Using Lemma~\ref{convinLC} one can easily pass to the limit in
\eqref{sufcond-eps}. Therefore, in view of the
general results presented in \cite{FKK2011a} we are able to state now the main theorem of this section.

\begin{theorem}[{Proposition 4.2, Theorem
4.4. of \cite{FKK2011a}}]
\label{prop_exist_V}Let the conditions of
Lemma~\ref{lemma-eps}
hold. Then
\begin{enumerate}
\item
$\bigl(\hat{L}_{V,\sharp}, \D\bigr)$ are generators of
a holomorphic semigroups $\hat{U}_{V,\sharp}(t)$ on $\L_C$.
\item $\hat{U}_{\eps,\sharp}(t)\longrightarrow
    \hat{U}_{V,\sharp}(t)$ strongly in
$\L_C$ uniformly on finite time intervals.
\item There exists $\alpha_0\in(0;1)$ such that for any
$\alpha\in(\alpha_0;1)$ the operator $\hat{L}^{\odot
\alpha}_{V,\sharp}=\hat{L}_{V,\sharp}^\ast$
with the domain
 \[
 \Dom\bigl( L^{\odot \alpha}_{V,\sharp}\bigr)
 = \bigl\{k\in \overline{\K_{\alpha
C}} \bigm|\hat{L}_{V,\sharp}^\ast k \in
\overline{\K_{\alpha C}}  \bigr\}.
\]
will be a generator of a strongly continuous
semigroup
$\hat{U}^{\odot \alpha}_{V,\sharp}(t)$ on the space
$\overline{\K_{\alpha
C}}$.
Moreover, for $k\in\K_{\alpha C}$
\begin{equation}\label{oper_adj_V}
(\hat{L}_{V,\sharp}^\ast k)(\eta)=-m|\eta|k(\eta)
+\sum_{x\in \eta }\int_{\Gamma _{0}}k(\xi \cup (\eta
\setminus
x))B_x^{V,\sharp}(\xi)d\lambda (\xi).\nonumber
\end{equation}
\item
Let $\alpha\in(\alpha_0;1)$, $\rho_0\in\bar{B}_{\alpha
C}^\infty$. Then the evolution equation
\[
\left\{
\begin{array}{l}
\frac{\displaystyle \partial }{\displaystyle \partial t} k_t=\hat{L}^\ast_V
k_t\\[2mm]
k_t\bigr|_{t=0}=e_\la(\rho_{0},\eta)
\end{array}\right.
\]
has a unique solution $k_t=e_\la(\rho_t)$ in
$\overline{\K_{\alpha
C}}$ provided $\rho_t$ belongs to $\bar{B}_{\alpha
C}^\infty$ and
satisfies the Vlasov-type equation
\begin{equation} \label{Vlasov_eqn}
\frac{\partial}{\partial t}
\rho_t(x)=-m\rho_t(x)+\int_{\Gamma
_{0}}e_\la(\rho_t,\xi)B_x^{V,\sharp}(\xi)d\lambda (\xi).
\end{equation}
\end{enumerate}
\end{theorem}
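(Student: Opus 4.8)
The plan is to reduce everything to the abstract machinery of \cite{FKK2011a}: once we have for $\hat L_{V,\sharp}$ the analogue of the key bound \eqref{sufcond}, parts~(1)--(3) are quoted results, and only part~(4) requires genuinely new work. The estimate we need is obtained by passing to the limit $\eps\to0$ in \eqref{sufcond-eps}. For $\la$-a.a. $\xi$ and a.a.\ $x\in\xi$, Lemma~\ref{convinLC} gives $\eps^{-|\eta|}\bigl(K_0^{-1}b_{\eps,\sharp}(x,(\xi\setminus x)\cup\cdot)\bigr)\to B_x^{V,\sharp}$ in $\L_C$, so by the reverse triangle inequality the integrals in \eqref{sufcond-eps} converge to $\int_{\Ga_0}|B_x^{V,\sharp}(\eta)|C^{|\eta|}d\la(\eta)$, and therefore
\[
\sum_{x\in\xi}\int_{\Ga_0}\bigl|B_x^{V,\sharp}(\eta)\bigr|\,C^{|\eta|}\,d\la(\eta)\le am|\xi|,\qquad \xi\in\Ga_0,
\]
with the same $a\in\bigl(0;\tfrac C2\bigr)$ as in Lemma~\ref{lemma-eps}. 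This is exactly hypothesis \eqref{sufcond} written for the operator $\hat L_{V,\sharp}$ of \eqref{newexprV} (which has the same diagonal-plus-birth structure as $\hat L$), so part~(1) follows from Theorem~3.2 of \cite{FKK2011a}, with sector and bounds depending only on $m$ and $a$.

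For part~(2), I would run the Trotter--Kato argument of \cite{FKK2011a}: $\D$ is a common core; for every $G\in\D$ one has $\hat L_{\eps,\sharp,\mathrm{ren}}G\to\hat L_{V,\sharp}G$ in $\L_C$, which is precisely what Lemma~\ref{convinLC} together with the explicit expressions \eqref{term-est-eps}--\eqref{term-fec-eps} delivers; and, since the bound above holds \emph{uniformly in} $\eps$ by Lemma~\ref{lemma-eps}, the generators $\hat L_{\eps,\sharp,\mathrm{ren}}$ are equi-sectorial, so the semigroups are equibounded. Hence $\hat U_{\eps,\sharp}(t)\to\hat U_{V,\sharp}(t)$ strongly, uniformly on compact time intervals. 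Part~(3) is then the $\odot$-dual construction carried out in the text before Theorem~3.8, applied verbatim to $\hat L_{V,\sharp}$: the sun-dual semigroup on $\overline{\Dom(\hat L_{V,\sharp}^\ast)}$ is strongly continuous, $\overline{\K_{\alpha C}}$ is invariant for $\alpha\in(\alpha_0;1)$ with $\alpha_0:=\tfrac{2a}{C}<1$ by the analogue of Theorem~3.8, the restricted generator is the part of $\hat L_{V,\sharp}^\ast$ in $\overline{\K_{\alpha C}}$, and the explicit formula \eqref{oper_adj_V} on $\K_{\alpha C}$ comes from \eqref{L_desc_expr} via the duality \eqref{duality} (equivalently, by letting $\eps\to0$ in \eqref{oper_adj_eps}).

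For part~(4), write the right-hand side of \eqref{Vlasov_eqn} as $-m\rho+\Upsilon_\sharp(\rho)$, where $\Upsilon_\sharp(\rho)(x):=\int_{\Ga_0}e_\la(\rho,\xi)B_x^{V,\sharp}(\xi)d\la(\xi)$. For $\|\rho\|_\infty\le\alpha C<C$ one has $|e_\la(\rho,\xi)|\le(\alpha C)^{|\xi|}$, so the estimate of part~(3) with $\xi$ a single point yields $\|\Upsilon_\sharp(\rho)\|_\infty\le am$; the elementary bound $|e_\la(\rho,\xi)-e_\la(\rho',\xi)|\le|\xi|(\alpha C)^{|\xi|-1}\|\rho-\rho'\|_\infty$ together with the geometric gain $|\xi|\alpha^{|\xi|}\le\const\,\beta^{|\xi|}$ for $\alpha<\beta<1$ shows that $\Upsilon_\sharp$ is Lipschitz on $\bar B_{\alpha C}^\infty$. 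Hence the Cauchy problem for \eqref{Vlasov_eqn} has a unique local $L^\infty$-solution by Picard--Lindel\"of, and the variation-of-constants formula together with $\|\Upsilon_\sharp(\rho_t)\|_\infty\le am$ and $m>0$ gives $\|\rho_t\|_\infty\le\max\{\|\rho_0\|_\infty,a\}\le\alpha C$ for all $t\ge0$ (here $a<\tfrac{\alpha C}{2}$); so $\rho_t$ is global and stays in $\bar B_{\alpha C}^\infty$. It remains to check that $k_t:=e_\la(\rho_t)$ solves $\partial_t k_t=\hat L_{V,\sharp}^\ast k_t$. Differentiating coherent states, $\partial_t e_\la(\rho_t,\eta)=\sum_{x\in\eta}e_\la(\rho_t,\eta\setminus x)\bigl(-m\rho_t(x)+\Upsilon_\sharp(\rho_t)(x)\bigr)$; the first group of terms is $-m|\eta|e_\la(\rho_t,\eta)$, while inserting the factorization $e_\la(\rho_t,\xi\cup(\eta\setminus x))=e_\la(\rho_t,\xi)\,e_\la(\rho_t,\eta\setminus x)$ (valid $\la\otimes\la$-a.e.) into \eqref{oper_adj_V} identifies the second group with $\sum_{x\in\eta}\int_{\Ga_0}k_t(\xi\cup(\eta\setminus x))B_x^{V,\sharp}(\xi)d\la(\xi)$. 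Since $\rho\mapsto e_\la(\rho)$ is Lipschitz (hence, via \eqref{Vlasov_eqn}, continuously differentiable) from $\bar B_{\alpha C}^\infty$ into $\overline{\K_{\alpha C}}$, this pointwise identity lifts to a $C^1$ identity of $\overline{\K_{\alpha C}}$-valued curves, so $e_\la(\rho_t)$ is a classical solution of the linear Cauchy problem in $\overline{\K_{\alpha C}}$; uniqueness is then immediate from part~(3), since a $C^1$-solution of $\dot k=\hat L^{\odot\alpha}_{V,\sharp}k$ necessarily coincides with $\hat U^{\odot\alpha}_{V,\sharp}(t)e_\la(\rho_0)$.

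The main obstacle is not the Vlasov ODE itself --- local well-posedness and the invariance of the ball $\bar B_{\alpha C}^\infty$ follow cleanly from the $m>0$ damping against the uniformly small nonlinearity --- but the bookkeeping of \emph{which} weighted space each object lives in. Differentiating and $K_0^{-1}$-transforming coherent states produces factors $|\eta|$, which push $\hat L_{V,\sharp}^\ast e_\la(\rho)$ and $\partial_t e_\la(\rho_t)$ slightly outside $\K_{\alpha C}$ into $\K_{\beta C}$ with $\beta<1$; one must check these functions still lie in the $\K_C$-closure $\overline{\K_{\alpha C}}$ (via particle-number truncation, using $\alpha<1$), and that $t\mapsto e_\la(\rho_t)$ is genuinely $\K_C$-differentiable, so that the pointwise computation above is an honest Banach-space-valued identity to which the uniqueness of part~(3) applies. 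This is precisely why the statement asserts solvability only in $\overline{\K_{\alpha C}}$ with $\alpha$ strictly below $1$.
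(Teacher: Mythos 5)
Your proposal is correct and follows essentially the same route as the paper: the only step the paper itself supplies is passing to the limit in \eqref{sufcond-eps} via Lemma~\ref{convinLC} to get the uniform bound for $B_x^{V,\sharp}$, after which all four assertions are quoted from Proposition~4.2 and Theorem~4.4 of \cite{FKK2011a}. Your additional reconstruction of those cited results (Trotter--Kato for part~(2), the sun-dual restriction for part~(3), and the coherent-state computation with semigroup uniqueness for part~(4)) is consistent with that machinery and introduces no substantive error.
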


Taking into account the explicit expressions
for $B_x^{V,\sharp}$, one can rewrite \eqref{Vlasov_eqn}
in more simple form. Namely, using \eqref{minlosid}, for the
establishment case we obtain
\begin{eqnarray*}
&&\frac{\partial}{\partial t}
\rho_t(x)=-m\rho_t(x)+\int_{\Gamma
_{0}}\int_{\X}e_\la(\rho_t,\eta\cup y)\varkappa
^{+}e_{\lambda }\left(-
\phi(x-\cdot),\eta \right) a^{+}(x-y)dyd\la(\eta)\\
&&+\int_{\Gamma
_{0}}\int_\X \int_\X e_\la(\rho_t,\eta\cup \{ y,y'\})a^+(x-y)
b^{+}(y-y')
e_{\lambda }\left( -\phi(x-\cdot),\eta  \right)dydy'd\lambda
(\eta),
\end{eqnarray*}
and, by \eqref{LP-exp-mean}, we will have
\begin{eqnarray}\label{Vlasov-eqn-est}
\frac{\partial}{\partial t}
\rho_t(x)&=&-m\rho_t(x)+\varkappa^+(\rho_t*a^+)(x)\exp\bigl\{-(\rho_t*\phi)(x)\bigr\}\\
&&+\bigl(\{(\rho_t*b^+)\rho_t\}*a^+\bigr)(x)\exp\bigl\{-(\rho_t*\phi)(x)\bigr\}
.\nonumber
\end{eqnarray}
Here and below $*$ means usual convolutions of functions on $\X$.

Analogously, for the fecundity case, we obtain
\begin{eqnarray}\label{Vlasov-eqn-fec}
\frac{\partial}{\partial t}
\rho_t(x)&=&-m\rho_t(x)+\varkappa^+\bigl(\{\rho_t\exp(-\rho_t*\phi)\}*a^+\bigr)(x)\\
&&+\bigl(\{(\rho_t*b^+)\rho_t\exp(-\rho_t*\phi)\}*a^+\bigr)(x)
.\nonumber
\end{eqnarray}

Of course, we are mostly interesting in nonnegative solution
of Vlasov equation to have $k_t=e_\la(\rho_t)$ is a correlation
function of Poisson non-homogeneous measure with intensity $\rho_t$.
The existence and uniqueness of such solution we establishes by the
following propositions.
\begin{proposition}
Suppose there exists $A>0$ such that $0\leq
\max\{a^+(x),b^+(x)\}\leq A\phi(x)$, $x\in\X$.
Let $c>0$ and
\begin{eqnarray}\label{bigmort-spec-est}
\varkappa^+\Bigl( 1+\frac{A}{e}\langle\phi\rangle
\Bigr)+c\langle
b^+\rangle\Bigl(2+\frac{A}{e}\langle\phi\rangle\Bigr)&<&m,\\
\frac{A}{e}\bigl(\varkappa^++\langle b^+\rangle\bigr)&\leq&
m.\label{bigmort-spec-est2}
\end{eqnarray}
Then the equation \eqref{Vlasov-eqn-est} with initial $0\leq \rho_0\in\bar{B}_c^\infty$
has a non-negative solution $\rho_t$. Moreover, $\rho_t\in\bar{B}_c^\infty$ and it is a
unique solution from $\bar{B}_c^\infty$.
\end{proposition}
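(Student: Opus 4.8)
The plan is to solve the Vlasov equation \eqref{Vlasov-eqn-est} by rewriting it as a mild (integral) equation and running a fixed-point argument in a ball of continuous $L^\infty(\X)$-valued paths. Write the right-hand side of \eqref{Vlasov-eqn-est} as $-m\rho_t+\Phi(\rho_t)$, where
\[
\Phi(\rho)(x):=\Bigl(\varkappa^+(\rho*a^+)(x)+\bigl(\{(\rho*b^+)\rho\}*a^+\bigr)(x)\Bigr)\exp\bigl\{-(\rho*\phi)(x)\bigr\},
\]
and recast the problem as the fixed-point equation
\begin{equation}\label{plan-mild}
\rho_t=e^{-mt}\rho_0+\int_0^t e^{-m(t-s)}\Phi(\rho_s)\,ds=:(\mathcal{I}\rho)_t .
\end{equation}
I would apply $\mathcal{I}$ on the complete metric space
\[
\mathcal{X}_c:=\Bigl\{\rho\in C\bigl([0,+\infty);L^\infty(\X)\bigr)\Bigm|0\le\rho_t(x)\le c\ \text{ for a.a. }x,\ \text{all }t\ge0\Bigr\},
\]
with the metric $d(\rho,\tilde\rho)=\sup_{t\ge0}\|\rho_t-\tilde\rho_t\|_{L^\infty}$; if a global contraction is not immediate one may instead work on $C([0,T];L^\infty(\X))$ for small $T$ and then extend.

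The scheme rests on two structural properties of $\Phi$. First, non-negativity and invariance of the ball: since $a^+,b^+\ge0$ and the exponential is positive, $\Phi(\rho)\ge0$ whenever $\rho\ge0$, and with $\rho_0\ge0$ this forces $(\mathcal{I}\rho)_t\ge0$; and for $0\le\rho\le c$, Young's inequality with $\langle a^+\rangle=1$, $\langle b^+\rangle=B$ gives $\|\rho*a^+\|_\infty\le c$, $\|\{(\rho*b^+)\rho\}*a^+\|_\infty\le c^2B$, $e^{-\rho*\phi}\le1$, hence $\|\Phi(\rho)\|_\infty\le c(\varkappa^++cB)$, which is $\le mc$ by \eqref{bigmort-spec-est} (which in particular implies $\varkappa^++cB<m$); plugging this into \eqref{plan-mild} yields $\|\rho_t\|_\infty\le e^{-mt}c+(1-e^{-mt})c=c$. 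Thus $\mathcal{I}(\mathcal{X}_c)\subset\mathcal{X}_c$, and the same growth bound is the a priori estimate that rules out blow-up and upgrades any local solution to a global one.

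The second and delicate property is that $\Phi$ is Lipschitz on the non-negative part of $\bar{B}_c^\infty$ with constant $L<m$. Here I would write $\Phi(\rho)-\Phi(\tilde\rho)$ as a telescoping sum in which the factors $\rho*a^+$, $(\rho*b^+)\rho$ and $e^{-\rho*\phi}$ are replaced one at a time by the corresponding $\tilde\rho$-quantities, and estimate each piece using: Young's inequality for the differences of convolutions (giving factors $\|\rho-\tilde\rho\|_\infty$ and $\|\rho-\tilde\rho\|_\infty\langle b^+\rangle$); the elementary inequality $|e^{-t}-e^{-s}|\le|t-s|$ for $s,t\ge0$ for the difference of exponentials (giving $\|\rho-\tilde\rho\|_\infty\langle\phi\rangle$); and the pointwise domination $a^+\le A\phi$, $b^+\le A\phi$ together with $te^{-t}\le e^{-1}$, $t^2e^{-t}\le4e^{-2}$ to absorb the remaining factors, of the form $(\rho*a^+)e^{-\rho*\phi}\le A(\rho*\phi)e^{-\rho*\phi}\le A/e$, against the decaying exponential. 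Collecting the contributions reproduces precisely the combination on the left-hand side of \eqref{bigmort-spec-est}, with \eqref{bigmort-spec-est2} handling the residual terms, as an upper bound for $L$, so $L<m$.

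Granting these, for $\rho,\tilde\rho\in\mathcal{X}_c$ equation \eqref{plan-mild} gives $\|(\mathcal{I}\rho)_t-(\mathcal{I}\tilde\rho)_t\|_\infty\le L\int_0^t e^{-m(t-s)}\|\rho_s-\tilde\rho_s\|_\infty\,ds\le\frac{L}{m}\,d(\rho,\tilde\rho)$, so $\mathcal{I}$ is a contraction on $\mathcal{X}_c$ and Banach's theorem produces a unique fixed point $\rho\in\mathcal{X}_c$; since $s\mapsto\Phi(\rho_s)$ is then continuous, \eqref{plan-mild} shows $\rho\in C^1([0,+\infty);L^\infty(\X))$ and $\rho$ solves \eqref{Vlasov-eqn-est}. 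Non-negativity and $\rho_t\in\bar{B}_c^\infty$ are built into $\mathcal{X}_c$, and uniqueness within $\bar{B}_c^\infty$ is part of the contraction statement (or, via the local-existence route, follows from Gronwall's inequality for $\|\rho_t-\tilde\rho_t\|_\infty$ using the Lipschitz bound on $\Phi$ over the ball). The only genuinely delicate step is the Lipschitz estimate — organizing the telescoping bound so that, after inserting $a^+,b^+\le A\phi$ and the scalar inequalities for $te^{-t}$, the resulting Lipschitz constant is exactly the expression in \eqref{bigmort-spec-est}–\eqref{bigmort-spec-est2} and hence strictly below $m$; the rest is a routine application of Young's inequality and the contraction mapping principle.
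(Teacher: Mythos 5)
Your proposal is correct and follows essentially the same route as the paper: the mild (Duhamel) reformulation, invariance of the set of nonnegative paths bounded by $c$, and a Banach fixed-point argument whose Lipschitz constant is exactly the left-hand side of \eqref{bigmort-spec-est} divided by $m$. One small remark: \eqref{bigmort-spec-est2} plays no role in the Lipschitz estimate (there are no residual terms there) — in the paper it is used precisely for the ball invariance, a step which your cruder bound $\varkappa^{+}+c\langle b^{+}\rangle<m$, deduced from \eqref{bigmort-spec-est} alone, handles without it.
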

\begin{proof}
Let us fix some $T>0$ and consider the Banach space $X_T=C([0;T],L^\infty(\X))$
of all continuous functions on $[0;T]$ with
values in $L^\infty(\X)$; the norm on $X_T$ is given by
$$
\|u\|_T:=\max\limits_{t\in[0;T]}\|u_t\|_{L^\infty(\X)}.
$$
We denote by $X_T^+$ the cone of all
nonnegative
functions
from $X_T$. Denote also by $B_{T,c}^+$ the set
of all functions $u$ from $X_T^+$ with $\|u\|_T\leq
c$.

Let $\Phi$ be a  mapping which
assign to any $v\in X_T$ the solution
$u_t$
of the linear Cauchy problem
\[
\left\{\begin{array}{l}
\frac{\displaystyle \partial}{\displaystyle \partial t} u_t(x) =\,  -
mu_{t}(x)+\varkappa^+(v_t*a^+)(x)\exp\bigl\{-(v_t*\phi)(x)\bigr\}\\[2mm]
\hphantom{\frac{\partial}{\partial t} u_t(x) =\,
}\,+\bigl(\{(v_t*b^+)v_t\}*a^+\bigr)(x)\exp\bigl\{-(v_t*\phi)(x)\bigr\},
\\
u_t \bigr|_{t=0}(x)=\,\rho_0(x),
\end{array}
\right.
\]
for a.a. $x\in\X$. Therefore,
\begin{eqnarray}\label{defPhi}
(\Phi v)_t(x)&=&\, e^{-mt}\rho_0(x)\\&&+\int_0^t
e^{-m(t-s)}\varkappa^+(v_s*a^+)(x)\exp\bigl\{-(v_s*\phi)(x)\bigr\}ds\nonumber\\&&+
\int_0^t e^{-m(t-s)}
\bigl(\{(v_s*b^+)v_s\}*a^+\bigr)(x)\exp\bigl\{-(v_s*\phi)(x)\bigr\}
ds.\nonumber
\end{eqnarray}
It is easy to see that $\Phi v\in X_T$. Indeed, one can estimate
\begin{eqnarray*}
\bigl|(\Phi v)_t(x)\bigr|&\leq& |\rho_0(x)|+\bigl(\varkappa^+
\|v\|_T+\langle b^+\rangle\|v\|_T^2\bigr)\int_0^t
e^{-(t-s)m}ds\\&\leq& c +\frac{\varkappa^+ \|v\|_T+\langle
b^+\rangle\|v\|^2_T}{m} ,
\end{eqnarray*}
where we have used the trivial inequality
\begin{equation}\label{H}
\|f\ast
g\|_{L^\infty(\X)}\leq\|f\|_{L^1(\X)}\|g\|_{L^\infty(\X)}, \quad f\in L^1(\X), \ g\in
L^\infty (\X).
\end{equation}
Clearly, $u_t$ solves
\eqref{Vlasov-eqn-est} if and only if $u$
is a fixed point of the mapping $\Phi:X_T\rightarrow
X_T$.

We have that $v\in X_T^+$ implies  $\Phi v\in
X_T^+$. Next, for any $v, w\in X_T^+$
\begin{eqnarray*}
&&\bigl| (\Phi v)_t(x)-(\Phi w)_t(x) \bigr| \\
&\leq&\,\varkappa^+\int_0^t e^{-m(t-s)}\Bigl|
(v_s*a^+)(x)\exp\bigl\{-(v_s*\phi)(x)\bigr\} \\
&&\qquad\qquad\qquad
- (w_s*a^+)(x)\exp\bigl\{-(w_s*\phi)(x)\bigr\}\Bigr|ds
\\ &&+\int_0^t e^{-m(t-s)}\Bigl|
\bigl(\{(v_s*b^+)v_s\}*a^+\bigr)(x)
\exp\bigl\{-(v_s*\phi)(x)\bigr\}\\
&&\qquad\qquad\qquad-\bigl(\{(w_s*b^+)w_s\}*a^+\bigr)(x)
\exp\bigl\{-(w_s*\phi)(x)\bigr\}\Bigr|ds.
\end{eqnarray*}
Taking into account \eqref{H} and obvious inequalities
$e^{-x}x\leq
e^{-1}$ for $x\geq0$, \mbox{$|e^{-a}-e^{-b}|\leq |a-b|$} for
$a,b\geq0$, and, moreover,
\[
|pe^{-a}-qe^{-b}|\leq e^{-a}|p-q|+qe^{-b}|e^{-(a-b)}-1|\leq
e^{-a}|p-q|+qe^{-b}|a-b|,
\]
for any $a,b,p,q\geq0$,
we obtain
\begin{eqnarray*}
&&\varkappa^+\int_0^t e^{-m(t-s)}\Bigl|
(v_s*a^+)(x)\exp\bigl\{-(v_s*\phi)(x)\bigr\} \\
&&\qquad\qquad\qquad -
(w_s*a^+)(x)\exp\bigl\{-(w_s*\phi)(x)\bigr\}\Bigr|ds\\
&\leq&\,\varkappa^+\int_0^t e^{-m(t-s)}\Bigl(
(|v_s-w_s|*a^+)(x)\exp\bigl\{-(v_s*\phi)(x)\bigr\} \\ && +
(w_s*a^+)(x)\exp\bigl\{-(w_s*\phi)(x)\bigr\}
(|v_s-w_s|*\phi)(x)\Bigr)ds\\
&\leq & \, \varkappa^+\|v-w\|_T\Bigl(
1+\frac{A}{e}\langle\phi\rangle \Bigr)\int_0^t
e^{-m(t-s)}ds\leq\|v-w\|_T\frac{\varkappa^+}{m}\Bigl(
1+\frac{A}{e}\langle\phi\rangle \Bigr);
\end{eqnarray*}
and, similarly,
\begin{eqnarray}
&&\int_0^t e^{-m(t-s)}\Bigl|
\bigl(\{(v_s*b^+)v_s\}*a^+\bigr)(x)\exp\bigl\{-(v_s*\phi)(x)\bigr\}\nonumber\\
&&-\bigl(\{(w_s*b^+)w_s\}*a^+\bigr)(x)\exp\bigl\{-(w_s*\phi)(x)\bigr\}\Bigr|ds\nonumber\\
&\leq&\, \int_0^t e^{-m(t-s)}\Bigl(
\bigl(\bigl|(v_s*b^+)v_s-(w_s*b^+)w_s\bigr|*a^+\bigr)(x)\exp\bigl\{-(v_s*\phi)(x)\bigr\}\nonumber
\\&&+\bigl(\{(w_s*b^+)w_s\}*a^+\bigr)(x)\exp\bigl\{-(w_s*\phi)(x)\bigr\}
(|v_s-w_s|*\phi)(x)\Bigr)ds.\label{dop1}
\end{eqnarray}
Using the bound
\begin{eqnarray*}
&&\bigl(\{(w_s*b^+)w_s\}*a^+\bigr)(x)\exp\bigl\{-(w_s*\phi)(x)\bigr\}\\
&\leq&\,
\|w_s*b^+\|_{L^\infty(\X)}(w_s*a^+)(x)\exp\bigl\{-(w_s*\phi)(x)\bigr\},
\end{eqnarray*}
we may continue to estimate \eqref{dop1} as follows
\begin{eqnarray*}
&\leq&\int_0^t
e^{-m(t-s)}\Bigl\|(v_s*b^+)v_s-(v_s*b^+)w_s\Bigr\|_{L^\infty(\X)}ds\\
&&+\int_0^t
e^{-m(t-s)}\Bigl\|(v_s*b^+)w_s-(w_s*b^+)w_s\Bigr\|_{L^\infty(\X)}ds
\\&&+\int_0^t
e^{-m(t-s)}\bigl\|w_s\bigr\|_{L^\infty(\X)}\langle
b^+\rangle\frac{A}{e}\|v-w\|_T\langle\phi\rangle
ds.
\end{eqnarray*}
For $\|v\|_t\leq c$, $\|w\|_T\leq c$ one can estimate this expression by
\[
\Bigl(2c \|v-w\|_T\langle b^+\rangle+c\langle
b^+\rangle\frac{A}{e}\|v-w\|_T\langle\phi\rangle\Bigr)
\int_0^t e^{-m(t-s)}ds.
\]
Therefore, for $v,w\in X_T^+$, $\|v\|_T\leq
c$, $\|w\|_T\leq c$
\[
\|\Phi v-\Phi w\|_T\leq \frac{\varkappa^+}{m}\Bigl(
1+\frac{A}{e}\langle\phi\rangle \Bigr)\|v-w\|_T+\frac{c\langle
b^+\rangle}{m}\Bigl(2+\frac{A}{e}\langle\phi\rangle\Bigr)\|v-w\|_T.
\]

Moreover, if $\rho_0\in\bar{B}_c^\infty$
and $v\in B_{T,c}^+$ then, by \eqref{defPhi},
\begin{eqnarray*}
|(\Phi v)_t(x)|&\leq& e^{-mt}c
+\frac{A\varkappa^+}{me}\bigl(1-e^{-mt}\bigr)c+
c\langle b^+\rangle\frac{A}{me}\bigl(1-e^{-mt}\bigr)\\
& =&\frac{cA}{me}\bigl(\varkappa^++\langle
b^+\rangle\bigr)+e^{-mt}c\Bigl(1-\frac{A}{me}\bigl(\varkappa^++\langle
b^+\rangle\bigr)\Bigr)\leq
c,
\end{eqnarray*}
provided \eqref{bigmort-spec-est2} holds.

As a result, by
\eqref{bigmort-spec-est}, \eqref{bigmort-spec-est2}, $\Phi$
is a contraction mapping on the closed set $B_{T,c}^+$.
Taking, as usual,
$v^{(n)}=\Phi^nv^{(0)}$, $n\geq1$ for $v^{(0)}\in B_{T,c}^+$
we obtain
that $\{v^{(n)}\}\subset B_{T,c}^+$ is a fundamental sequence
in $X_T$
which has, as a result, a unique limit point $v\in X_T$. Since
$B_{T,c}^+$
is a closed set we have that $v\in B_{T,c}^+$. Then,
according to the
classical Banach fixed point theorem, $v$ will be a fixed
point of
$\Phi$ on $X_T$ and a unique fixed point on $B_{T,c}^+$.
\end{proof}

The same considerations may be applied to the Vlasov equation
\eqref{Vlasov-eqn-fec}. To combine these results with
statement of
Theorem~\ref{prop_exist_V} we need additionally that
\eqref{bigmort-spec-est}, \eqref{bigmort-spec-est2} hold with
$c=\alpha C$.

\section*{Acknowledgments}
The financial support of DFG through the SFB 701
(Bielefeld University) and German-Ukrainian Project KO 1989/6-1 is gratefully
acknowledged.

\end{document}